\newcommand{\IN}{{\mathbb{N}}}
\newcommand{\IR}{{\mathbb{R}}}
\newcommand{\IP}{{\mathbb{P}}}
\newcommand{\IE}{{\mathbb{E}}}
\newcommand{\1}{\mathbbmss{1}}
\renewcommand{\phi}{\varphi}
\renewcommand{\epsilon}{\varepsilon}
\newcommand{\A}{\mathcal{A}}
\newcommand{\ltwo}{{\ell^2(X,m)}}
\newtheorem{theorem}{Theorem}[section]
\newtheorem{lemma}[theorem]{Lemma}
\newtheorem{proposition}[theorem]{Proposition}
\newtheorem{remarks}[theorem]{Remark}
\newtheorem{definition}[theorem]{Definition}
\newtheorem{assumption}[theorem]{Assumption}
\begin{document}

\title{A note on Neumann problems on graphs}

\author[Hinz]{Michael Hinz$^1$}
\address{M. Hinz,  Fakult\"at f\"ur Mathematik \\ Universit\"at Bielefeld \\
 33501 Bielefeld, Germany} \email{mhinz@math.uni-bielefeld.de}

\author[Schwarz]{Michael Schwarz}
\address{M. Schwarz, Institut f{\"u}r Mathematik \\Universit{\"a}t Potsdam \\14476 Potsdam, Germany}\email{mschwarz@math.uni-potsdam.de}

\thanks{$(^1)$ Research supported in part by the DFG IRTG 2235: 'Searching for the regular in the irregular: Analysis of singular and random systems'.}

\begin{abstract}
We discuss Neumann problems for self-adjoint Laplacians on (possibly infinite) graphs. Under the assumption
that the heat semigroup is ultracontractive we discuss the unique solvability for non-empty
subgraphs with respect to the vertex boundary and provide analytic and probabilistic representations for 
Neumann solutions. A second result deals with Neumann problems on canonically compactifiable graphs with respect to
the Royden boundary and provides conditions for unique solvability and analytic and probabilistic representations.
\end{abstract}
\maketitle

\section{Introduction}

The present note discusses aspects of Neumann boundary value problems for self-adjoint Laplace operators 
on graphs in the framework of \cite{KL10, KL12}, which in particular can accommodate finite and infinite weighted graphs. Our goals are to formulate Neumann problems in this framework, to provide sufficient conditions for their unique solvability and to implement analytic and probabilistic representations for their solutions. The first part of the note is rather expository. There we consider Neumann problems for subgraphs and their vertex boundaries. The second part deals with Neumann problems for self-adjoint Laplacians on canonically compactifiable graphs, then with respect to the Royden boundary, an ideal boundary particularly suitable to discuss energy finite functions, \cite{CC63, Doob62, kasue, KLSW17, Maeda64, Soardi} see also \cite{H13}. This can be seen as a counterpart to the study of Dirichlet problems with respect to the Royden boundary, \cite{Soardi, KLSW17}, and complements earlier results by Kasue, \cite{kasue}.

The literature on discrete Dirichlet problems (with respect to the vertex boundary) is quite abundant, see for instance \cite{Chung97, Woess} for finite and \cite{KLW} for possibly infinite graphs. Probabilistic representations of solution to Dirichlet problems (see e.g. \cite[Chapter II, (1.15) Theorem]{Bass95}) with respect to the vertex boundary are well known for Laplacians on graphs, see for instance \cite{Woess}. For Euclidean domains they are originally due to Doob, \cite{Doob54}. There is a rich literature on Dirichlet problems on infinite graphs with respect to abstract ideal boundaries, we mention only \cite{Doob59, Woess00, Woess} and in particular, \cite{KLSW17, Soardi} for the case of the Royden boundary.

Fewer references discuss Neumann Laplacians on graphs. From operator and spectral theoretic perspectives they have been investigated in \cite{ChY94, ChGY96, ChY97, Tan99} for finite graphs and in \cite{HKLW, HKMW} for general graphs.
A study of Neumann problems on finite graphs with respect to the vertex boundary, including a probabilistic formula for solutions, has been provided in \cite{BenMen05}, see in particular their Theorem 8.1. A probabilistic formula for the solution of the Neumann problem in Euclidean domains is due to Brosamler, \cite{brosamler}, see also \cite{Hsu85} for Schr\"odinger operators, \cite{bass_hsu} for Lipschitz domains, and \cite{BM09} for more general differential operators. 
In \cite{kasue} Neumann problems on transient graphs with respect to the Kuramochi boundary were studied, and the
existence and uniqueness of solutions was verified, \cite[Theorem 7.9]{kasue}. What we could not find in the existing literature are more concrete representation formulas for Neumann solutions on subgraphs or with respect to the abstract Royden boundary. 

In Section ~\ref{section:Brosamler_finite} of the present paper we use the framework of \cite{KL10, KL12} and follow the methods of \cite{bass_hsu} to show existence and uniqueness of solutions to the Neumann problem for subgraphs with respect to the vertex boundary under the assumption of ultracontractivity of the associated semigroup and to establish analytic and probabilistic representations, Theorem~\ref{theoremvertexboundary}. A main tool is the uniform exponential convergence to equilibrium, \cite[Theorem 2.4]{bass_hsu}, see also \cite[Proposition 2.2]{Pardoux99}. 

In Section~\ref{section:brosamler_royden} we consider canonically compactifiable graphs and their Royden boundaries. These graphs were introduced and studied in \cite{canon}, and recent research, c.f. \cite{canon, KS17}, indicates that they may be seen as discrete analogues of smooth bounded Euclidean domains. Their Royden compactifications are metrizable, as can be verified by elementary arguments, Lemma \ref{lemma:royden_metrizable}. Since Royden compactifications are compact Hausdorff spaces, metrizability is equivalent
to second countability, and the latter is a crucial ingredient for the construction of a sufficiently well behaved Markov process on the whole compacification, see \cite[Chapter I, (9.4) Theorem]{BG68}, needed for probabilistic representations of Neumann solutions. Moreover, the semigroup of the Neumann Laplacian on a canonically compactifiable graph is ultracontractive, so that we may again follow \cite{bass_hsu} to establish unique solvability and analytic and probabilistic representations, see Theorem~\ref{theorem:solution_royden}. Although for canonically compactifiable graphs the Royden boundary coincides with the Kuramochi boundary (this is shortly discussed in \cite[Section 3]{K17}) our study differs from that in \cite[Section 7]{kasue}. While this reference uses abstract techniques to establish existence and uniqueness, our study relies on the use of heat kernels and, different to \cite[Section 7]{kasue}, provides concrete representations of the solutions. In this sense our results may be seen as complements to \cite{kasue}. 

Both Section~\ref{section:Brosamler_finite} and Section~\ref{section:brosamler_royden} follow the same schedule: We introduce a Dirichlet form on the space including the boundary and define normal derivatives on the boundary via a Gauss-Green type formula as usual. We formulate the Neumann problem, show  that solutions for the same boundary data differ only by constants and that a solution must have centered boundary data and give a sufficient condition for a function to be a Neumann solution. Using variants of the Revuz correspondence (and assuming ultracontractivity in the case of the vertex boundary) we then prove Theorems ~\ref{theoremvertexboundary} and \ref{theorem:solution_royden}.

In Section~\ref{section:Preliminaries} we introduce graphs and some related objects, such as Laplacians and Neumann forms. Some material on Feller transition functions, Dirichlet forms, semigroups, heat kernels and the Royden compactification of canonically compactifiable graphs are collected in an appendix. 

\section*{Acknowledgements}

The authors would like to thank Matthias Keller for helpful discussions and for pointing out various references.

\section{Preliminaries}\label{section:Preliminaries}

Following the presentations in \cite{KL10, KL12} we introduce graphs and some associated objects needed to define normal derivatives and to phrase the Neumann problem.

Let $X$ be an at most countable set. A \emph{graph} on $X$ is a symmetric function $b:X\times X\to [0,\infty)$ that vanishes on the diagonal and satisfies $\sum_{y\in X} b(x,y)<+\infty$
for every $x\in X$. A graph is called \emph{connected} if for every $x,y\in X$ there are $x_1,\ldots, x_n\in X$ such that $b(x,x_1)>0, b(x_1,x_2)>0,\ldots, b(x_n,y)>0$. We equip $X$ with the discrete topology, so that all real valued functions on $X$ are continuous, and we denote their vector space by $C(X)$. By 
\begin{equation}\label{E:Q}
\widetilde{Q}(u)=\frac12\sum_{x,y\in X} b(x,y)(u(x)-u(y))^2
\end{equation}
we define a quadratic form $\widetilde{Q}:C(X)\to [0,\infty]$, its effective domain is denoted by $\widetilde{D}:=\{u\in C(X)\colon \widetilde{Q}(u)<+\infty\}$. On $\widetilde{D}$ the form $\widetilde{Q}$ induces a symmetric non-negative definite bilinear form via polarization, also denoted by $\widetilde{Q}$. 

Every Borel measure $m$ on $X$ is given by a function $m:X\to [0,\infty]$, via
$m(A):=\sum_{x\in A} m(x)$. The space of real valued functions on $X$ which are square-summable with respect to $m$ is denoted by $\ltwo$, with norm $\|\cdot\|_{\ell^2(X,m)}$ and scalar product $\langle \cdot,\cdot\rangle_{\ell^2(X,m)}$. If $m(x)>0$ for all $x\in X$ then we call $(X,m)$ a \emph{discrete measure space} and say it is \emph{finite} if $m(X)<+\infty$. 

Suppose $(X,m)$ is a discrete measure space. Since by Fatou's lemma the form $\widetilde{Q}$ is lower semicontinuous with respect to pointwise convergence, the restriction $Q$ of $\widetilde{Q}$ to 
\[D(Q):=\widetilde{D}\cap\ltwo\]
is a Dirichlet form. For details on Dirichlet forms see Appendix \ref{appendix:Dirichlet} and \cite{FOT, KLW}. Its generator $L$, (\ref{E:saop}), turns out to be a restriction of the \emph{formal Laplacian} $\widetilde{L}$, defined on the space $\widetilde{F}$ consisting of all $f\in C(X)$ with $\sum_{y\in X} b(x,y)|f(x)|<+\infty$ for all $x\in X$ 
by 
\[\widetilde{L}f(x)=\frac{1}{m(x)}\sum_{y\in X} b(x,y)(u(x)-u(y)),\] 
$x\in X$, c.f. \cite{HKLW}. The form $Q$ is called the \emph{Neumann form for $b$} and the operator $L$ is called the \emph{Neumann Laplacian for $b$}. 

\section{Neumann problems for the vertex boundary}\label{section:Brosamler_finite}

W discuss Neumann problems on the vertex boundary of (sub-)graphs. Let $b$ be a graph over a set $X$ and $A\subsetneq X$ be non-empty. We define the \emph{vertex boundary} of $A$ as 
$$\partial_v A:=\{y\in X\setminus A:\text{ there is }x\in A\text{ such that }b(x,y)>0\}$$
and set $\bar{A}=A\cup \partial_v A.$ On $\bar{A}$ we induce a graph structure $b_{\bar{A}}$ by defining $b_{\bar{A}}(x,y)=b(x,y)$ for $x,y\in A$, or $x\in A,y\in \partial_v A$, or $x\in\partial_v A,y\in A$, and $b_{\bar{A}}(x,y)=0$ otherwise. We make the following standing assumption.

\begin{assumption}\label{A:basicdiscrete}
The graph $b_{\bar{A}}$ on $\bar{A}$ is connected, and $\bar{A}$ is equipped with a measure $m$ such that $(\bar{A},m)$ is a finite discrete measure space. 
\end{assumption}

Denote the Neumann form for $b_{\bar{A}}$, defined as in (\ref{E:Q}), by $Q_{\bar{A}}$ and write $L_{\bar{A}}$ for the Neumann Laplacian. Since $m$ is finite, we have $\mathbf{1}_{\bar{A}}\in D(Q_{\bar{A}})$.
The operator $L_{\bar{A}}$ is a restriction of the formal Laplacian $\widetilde{L}_{\bar{A}}$ for the graph $b_{\bar{A}}$, given by      
\[\widetilde{L}_{\bar{A}}f(x)=\frac{1}{m(x)}\sum_{y\in \bar{A}}b_{\bar{A}}(x,y)(f(x)-f(y))\]  
for every $x\in \bar{A}$ and $f$ from its domain $\widetilde{F}_{\bar{A}}$, defined as in Section \ref{section:Preliminaries}. Obviously we have $\operatorname{ker}(\widetilde{L}_{\bar{A}})=\operatorname{span}(1)$. For $x\in\bar{A}$ and $f\in \widetilde{F}_{\bar{A}}$ we define 
\[\Delta_A f(x)=\begin{cases}
      \widetilde{L}_{\bar{A}}f(x),&x\in A\\
      0,&x\in\partial_v A.
\end{cases}\]
 
A usual way to introduce a notion of normal derivative at the vertex boundary $\partial_v A$ is to define it in terms of a Gauss-Green  formula, see for instance \cite{JP13, K17}.

 \begin{definition}
  Let $u\in D(Q_{\bar{A}})$. We say that $\varphi\in L^2(\partial_v A,m)$ is a \emph{normal derivative of $u$} if for every $v\in D(Q_{\bar{A}})$ one has 
\begin{equation}\label{E:defnormal} 
 Q_{\bar{A}}(u,v)-\langle \Delta_A u,v\rangle_{\ell^2(A,m)}=\langle \varphi, v\rangle_{\ell^2(\partial_v A,m)}.
\end{equation} 
 \end{definition}

Since $D(Q)|_{\partial_v X}$ is dense in $\ell^2(\partial_v A,m)$ one easily deduces that if $\varphi\in L^2(\partial_v A,m)$ is a normal derivative of a function $u\in D(Q_{\bar{A}})$, then it is the only normal derivative of $u$. In this case we denote this unique normal derivative by $\frac{\partial}{\partial n}u$. The normal derivative can be evaluated pointwise. 
\begin{lemma}
 Suppose $u\in D(Q_{\bar{A}})$ admits a normal derivative. Then for every $x\in\partial_v A$ we have 
 $$ \frac{\partial}{\partial n} u(x)=\frac{1}{m(x)}\sum_{y\in \bar{A}} b_{\bar{A}}(x,y)(u(x)-u(y))=\frac{1}{m(x)}\sum_{y\in A} b_{\bar{A}}(x,y)(u(x)-u(y)).$$
\end{lemma}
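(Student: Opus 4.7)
The plan is to test the defining Gauss–Green identity (\ref{E:defnormal}) against the indicator $v=\mathbf{1}_{\{x\}}$ for a fixed $x\in\partial_v A$ and read off $\frac{\partial}{\partial n}u(x)$.

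First I would verify that $\mathbf{1}_{\{x\}}$ is an admissible test function, i.e.\ that it lies in $D(Q_{\bar{A}})$. Since $m$ is finite by Assumption~\ref{A:basicdiscrete}, one has $\|\mathbf{1}_{\{x\}}\|_{\ell^2(\bar{A},m)}^2=m(x)<\infty$; and since $\sum_{y\in\bar{A}}b_{\bar{A}}(x,y)<\infty$ by the definition of a graph, $\widetilde{Q}_{\bar{A}}(\mathbf{1}_{\{x\}})=\sum_{y\in\bar{A}}b_{\bar{A}}(x,y)$ is finite as well.

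Next I would compute the three quantities in (\ref{E:defnormal}) with this choice of $v$. The right-hand side reduces to $\langle\varphi,\mathbf{1}_{\{x\}}\rangle_{\ell^2(\partial_v A,m)}=\varphi(x)m(x)$. The term $\langle\Delta_A u,\mathbf{1}_{\{x\}}\rangle_{\ell^2(A,m)}$ vanishes because $x\notin A$ and the inner product is taken over $A$. For $Q_{\bar{A}}(u,\mathbf{1}_{\{x\}})$, I would apply polarization to (\ref{E:Q}): the factor $\mathbf{1}_{\{x\}}(z)-\mathbf{1}_{\{x\}}(w)$ is nonzero exactly when precisely one of $z,w$ equals $x$, so symmetry of $b_{\bar{A}}$ together with the prefactor $\tfrac12$ gives
\[
Q_{\bar{A}}(u,\mathbf{1}_{\{x\}})=\sum_{y\in\bar{A}}b_{\bar{A}}(x,y)(u(x)-u(y)).
\]
Absolute convergence of this sum (needed to split and recombine the polarized double sum) is guaranteed by Cauchy--Schwarz:
\[
\sum_{y\in\bar{A}}b_{\bar{A}}(x,y)|u(x)-u(y)|\le\Big(\sum_{y\in\bar{A}}b_{\bar{A}}(x,y)\Big)^{1/2}\bigl(2\widetilde{Q}_{\bar{A}}(u)\bigr)^{1/2}<\infty.
\]

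Combining the three evaluations yields $\varphi(x)m(x)=\sum_{y\in\bar{A}}b_{\bar{A}}(x,y)(u(x)-u(y))$; dividing by $m(x)>0$ gives the first claimed identity. For the second identity I would note that, by the very definition of $b_{\bar{A}}$, one has $b_{\bar{A}}(x,y)=0$ whenever $x,y\in\partial_v A$, so for $x\in\partial_v A$ only summands with $y\in A$ survive. There is no real obstacle here; the argument is essentially a direct substitution, and the only small point to watch is the justification of the summation manipulations, which is handled by the Cauchy--Schwarz bound above.
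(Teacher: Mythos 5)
Your proposal is correct and follows essentially the same route as the paper: testing the defining identity (\ref{E:defnormal}) against $v=\mathbf{1}_{\{x\}}$, evaluating $Q_{\bar{A}}(u,\mathbf{1}_{\{x\}})$ via polarization, and using $b_{\bar{A}}(x,y)=0$ for $y\in\partial_v A$ to get the second equality. The only difference is that you spell out the admissibility of $\mathbf{1}_{\{x\}}$ and the absolute convergence of the polarized sum, details the paper leaves implicit.
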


\begin{proof} An application of (\ref{E:defnormal}) with $v=1_x$ yields $Q_{\bar{A}}(u,1_x)= \frac{\partial}{\partial n} u(x)m(x)$, and by the definition of $Q_{\bar{A}}$ this is seen to be
\[\frac12\sum_{y,z\in\bar{A}}b_{\bar{A}}(y,z)(u(y)-u(z))(1_{x}(y)-1_x(z))=\sum_{y\in\bar{A}}b_{\bar{A}}(x,y)(u(x)-u(y)),\]
what shows the first equality. For the second use $b_{\bar{A}}(x,y)=0$,  $y\in \partial_v A$.
\end{proof}

\begin{definition} Let $\varphi\in L^2(\partial_v A,m)$. We say that a function $u\in D(Q_{\bar{A}})$ \emph{solves the Neumann problem for $\varphi$} if \[\Delta_A u(x)=0\quad \text{ and }\quad \frac{\partial}{\partial n}u(y)=\varphi(y)\] 
hold for every $x\in A$ and $y\in \partial_v A$. 
\end{definition}

Similarly as in classical analysis the following holds.

\begin{lemma}\label{lemma:uniqueness_vertex}
 Let $\varphi\in \ell^2(\partial_v A,m)$. Then two solutions of the Neumann problem for $\varphi$ differ only by a constant.  
\end{lemma}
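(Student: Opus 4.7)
The plan is to reduce to showing that a solution with vanishing boundary data and vanishing Laplacian on $A$ must be constant. Let $u_1,u_2$ be two solutions for the same $\varphi$ and set $w:=u_1-u_2\in D(Q_{\bar{A}})$. Then by linearity of $\Delta_A$ and of the normal derivative, $\Delta_A w(x)=0$ for every $x\in A$ and $\frac{\partial}{\partial n}w(y)=0$ for every $y\in\partial_v A$.

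Next I would test the Gauss--Green identity (\ref{E:defnormal}) against $v=w$ itself. This is admissible since $w\in D(Q_{\bar{A}})$, and it yields
\[Q_{\bar{A}}(w,w)=\langle\Delta_A w,w\rangle_{\ell^2(A,m)}+\langle\tfrac{\partial}{\partial n}w,w\rangle_{\ell^2(\partial_v A,m)}=0,\]
both terms on the right being zero by the previous step.

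Finally I would invoke the explicit formula (\ref{E:Q}) for the quadratic form, which gives
\[0=Q_{\bar{A}}(w,w)=\frac12\sum_{x,y\in\bar{A}}b_{\bar{A}}(x,y)(w(x)-w(y))^2.\]
Since every summand is non-negative, $w(x)=w(y)$ whenever $b_{\bar{A}}(x,y)>0$. By the connectedness of $b_{\bar{A}}$ guaranteed by Assumption~\ref{A:basicdiscrete}, any two vertices of $\bar{A}$ are joined by a path of positive-weight edges, so $w$ is constant on $\bar{A}$, which is the claim.

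The only mildly delicate point is to confirm that $w$ may be used as a test function in (\ref{E:defnormal}); this is immediate here because $w$ lies in $D(Q_{\bar{A}})$ by linearity of that space. Everything else is standard and uses only the definitions of solution, normal derivative, and the form $Q_{\bar{A}}$ together with the connectedness hypothesis.
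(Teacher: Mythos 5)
Your argument is correct, but it takes a genuinely different route from the paper's. Both proofs begin identically, by passing to the difference $w=u_1-u_2$, which solves the Neumann problem for zero boundary data. The paper then finishes pointwise: combining $\Delta_A w=0$ on $A$ with the pointwise evaluation of the normal derivative (the lemma preceding the definition of the Neumann problem) gives $\widetilde{L}_{\bar{A}}w\equiv 0$ on all of $\bar{A}$, and the paper appeals to its earlier assertion that $\operatorname{ker}(\widetilde{L}_{\bar{A}})=\operatorname{span}(1)$. You instead run the energy method: test the Gauss--Green identity (\ref{E:defnormal}) with $v=w$ to obtain $Q_{\bar{A}}(w)=0$, then use the explicit formula (\ref{E:Q}) and the connectedness required by Assumption~\ref{A:basicdiscrete} to force $w$ to be constant. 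Your version is self-contained and arguably more robust: that the kernel of the \emph{formal} Laplacian of a connected graph consists only of constants is not automatic when $\bar{A}$ is infinite (on $\mathbb{Z}$ with unit weights the identity map is harmonic), whereas $Q_{\bar{A}}(w)=0$ together with connectedness yields constancy with no further input. The small price is that you must justify that $w$ admits a normal derivative equal to $0$ and that $v=w$ is an admissible test function in (\ref{E:defnormal}); both follow from linearity of the defining identity and the uniqueness of normal derivatives established just before the lemma, and you address exactly these points. In short: the paper buys brevity at the cost of an ``obvious'' kernel claim, while your proof stays entirely inside $D(Q_{\bar{A}})$ and uses only the definitions.
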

\begin{proof}
The difference $u-v$ of two solutions $u,v$ solves the Neumann problem for zero boundary data, what implies $u-v\in\operatorname{ker}(\widetilde{L}_{\bar{A}})$, hence $u-v$ is constant.
\end{proof}

Also the following is as in the classical case.
\begin{proposition}\label{proposition:Existence_solution_finite_graph}
 Let $\varphi\in L^2(\partial_v A, m)$. If the Neumann problem has a solution, then the equality $\sum_{x\in\partial_v A} \varphi(x) m(x)=0$ holds.
\end{proposition}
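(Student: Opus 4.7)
The plan is to test the defining Gauss--Green identity \eqref{E:defnormal} against the constant function $v=\mathbf{1}_{\bar{A}}$. Assumption~\ref{A:basicdiscrete} guarantees that $m$ is finite, so $\mathbf{1}_{\bar{A}}\in D(Q_{\bar{A}})$, which makes this a legitimate test function.

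First I would observe that $Q_{\bar{A}}(u,\mathbf{1}_{\bar{A}})=0$: this is immediate from the definition \eqref{E:Q}, since all differences $\mathbf{1}_{\bar{A}}(x)-\mathbf{1}_{\bar{A}}(y)$ vanish. (Alternatively, constants lie in $\ker \widetilde L_{\bar A}$, and one can use the symmetry of $Q_{\bar A}$ together with the pointwise evaluation already established.) Next, since $u$ solves the Neumann problem for $\varphi$, we have $\Delta_A u(x)=0$ for all $x\in A$, so
\[
\langle \Delta_A u,\mathbf{1}_{\bar{A}}\rangle_{\ell^2(A,m)}=0.
\]
Finally, by the definition of a normal derivative applied with $v=\mathbf{1}_{\bar{A}}$,
\[
0=Q_{\bar{A}}(u,\mathbf{1}_{\bar{A}})-\langle \Delta_A u,\mathbf{1}_{\bar{A}}\rangle_{\ell^2(A,m)}=\langle \varphi,\mathbf{1}_{\bar{A}}\rangle_{\ell^2(\partial_v A,m)}=\sum_{x\in \partial_v A}\varphi(x)\,m(x),
\]
which is the desired centering condition.

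There is no real obstacle here: the only points to check are that $\mathbf{1}_{\bar{A}}$ is an admissible test function (which uses $m(\bar{A})<\infty$) and that $Q_{\bar{A}}$ vanishes on constants, both of which are trivial. The argument is the direct discrete analogue of the classical compatibility condition $\int_{\partial\Omega}\varphi\,d\sigma=0$ obtained by integrating $\Delta u=0$ over $\Omega$.
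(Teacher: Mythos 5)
Your proof is correct and is essentially the same as the paper's: both test the defining identity \eqref{E:defnormal} with $v=\mathbf{1}_{\bar{A}}$ and use that $Q_{\bar{A}}(u,\mathbf{1}_{\bar{A}})=0$ and $\Delta_A u=0$ to conclude $\langle\varphi,\mathbf{1}_{\bar{A}}\rangle_{\ell^2(\partial_v A,m)}=0$. Your write-up just spells out the two vanishing terms a bit more explicitly than the paper does.
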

\begin{proof}
Suppose $u$ solves the Neumann problem. Then (\ref{E:defnormal}) and (\ref{E:saop}) imply
$$0=Q_{\bar{A}}(u,\mathbf{1}_{\bar{A}})-\langle \Delta_A u,\mathbf{1}_{\bar{A}}\rangle_{\ell^2(A,m)}=\langle \varphi, \mathbf{1}_{\bar{A}}\rangle_{\ell^2(\partial_v A,m)} =\sum_{x\in\partial_v A} \varphi(x) m(x).\qedhere$$

\end{proof}

We give a sufficient condition for a function to be a solution of the Neumann problem for bounded $\varphi$. To do so, consider the space
 $$\mathcal{N}=\left\{u\in D(L_{\bar{A}})\cap \ell^\infty(\bar{A})\colon  L_{\bar{A}} u\in\ell^\infty(X)\right\},$$ 
it can be seen as a space of test functions.  
 \begin{lemma}\label{lemma:sufficientvertex}
  The space $\mathcal{N}$ is dense in $D(Q_{\bar{A}})$ with respect to $\|\cdot\|_{Q_{\bar{A}}}$. 
 \end{lemma}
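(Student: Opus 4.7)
The approach is a two-stage approximation in the form norm $\|\cdot\|_{Q_{\bar A}}$: for an arbitrary $u\in D(Q_{\bar A})$, I would first truncate $u$ to obtain a bounded element of $D(Q_{\bar A})$ close to $u$, then regularize this truncation via the resolvent of $L_{\bar A}$ to land in $\mathcal N$.

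Truncation step. For $n\in\mathbb{N}$, set $T_n u:=(-n)\vee u\wedge n$. Since $x\mapsto (-n)\vee x\wedge n$ is a normal contraction and $Q_{\bar A}$ is a Dirichlet form, $T_n u\in D(Q_{\bar A})\cap\ell^\infty(\bar A)$ with $Q_{\bar A}(T_n u)\le Q_{\bar A}(u)$. Pointwise $T_n u\to u$, while the contraction property gives
\[\bigl|(T_n u-u)(x)-(T_n u-u)(y)\bigr|\le 2|u(x)-u(y)|,\]
so that every summand in the jump-sum representation of $Q_{\bar A}(T_n u-u)$ is bounded by $4 b_{\bar A}(x,y)(u(x)-u(y))^2$ and tends pointwise to zero. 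Dominated convergence then yields $Q_{\bar A}(T_n u-u)\to 0$, and since $(\bar A,m)$ is finite the same argument gives $\|T_n u-u\|_{\ell^2(\bar A,m)}\to 0$. Hence $T_n u\to u$ in the form norm.

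Resolvent step. For a bounded $v\in D(Q_{\bar A})\cap\ell^\infty(\bar A)$, set $v_\alpha:=\alpha(\alpha+L_{\bar A})^{-1}v$ for $\alpha>0$. Then $v_\alpha\in D(L_{\bar A})$, and sub-Markovianity of the resolvent of the Dirichlet form $Q_{\bar A}$ gives $\|v_\alpha\|_\infty\le\|v\|_\infty$. The identity $L_{\bar A} v_\alpha=\alpha(v-v_\alpha)$ then forces $\|L_{\bar A} v_\alpha\|_\infty\le 2\alpha\|v\|_\infty$, so $v_\alpha\in\mathcal N$. Standard Yosida-type convergence (established via the spectral theorem for $L_{\bar A}$ together with dominated convergence) yields $v_\alpha\to v$ in the form norm as $\alpha\to\infty$.

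Combining the two stages, $u$ is first approximated by $T_n u$ and then $T_n u$ by $(T_n u)_\alpha\in\mathcal N$, which proves density. The only delicate point is the form-norm convergence of the truncations; but this reduces transparently to dominated convergence once $Q_{\bar A}$ is written as a jump sum and the normal-contraction estimate is used. The resolvent regularization is routine Dirichlet form / semigroup theory.
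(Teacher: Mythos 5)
Your argument is correct, but it reaches the conclusion by a genuinely different route than the paper. The paper's proof works on the \emph{image} side of the operator: it uses that $D(L_{\bar{A}})$ is $\|\cdot\|_{Q_{\bar{A}}}$-dense in $D(Q_{\bar{A}})$, writes $f=(L_{\bar{A}}+1)v$ for $v\in D(L_{\bar{A}})$, approximates $f$ in $\ell^2(\bar{A},m)$ by bounded functions $f_n$, and notes that $w_n:=(L_{\bar{A}}+1)^{-1}f_n\in\mathcal{N}$ with $\|w_n-v\|_{Q_{\bar{A}}}^2=\langle f_n-f,(L_{\bar{A}}+1)^{-1}(f_n-f)\rangle_{\ell^2(\bar{A},m)}\to 0$; no truncation and no use of the jump-sum structure of $Q_{\bar{A}}$ is needed. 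You instead work on the \emph{source} side: truncate $u$ to get a bounded element of $D(Q_{\bar{A}})$ (justified by the normal-contraction property plus dominated convergence in the jump sum), then map it into $\mathcal{N}$ by the resolvent, using the $\ell^\infty$-contractivity of $\alpha(L_{\bar{A}}+\alpha)^{-1}$ and the identity $L_{\bar{A}}v_\alpha=\alpha(v-v_\alpha)$ --- which is precisely the observation the paper itself uses later, in the proof of Lemma~\ref{lemma:char_solution_vertex_boundary}, to see that $\alpha(L_{\bar{A}}+\alpha)^{-1}u\in\mathcal{N}$. Both proofs are complete; the paper's is shorter and purely operator-theoretic, while yours costs an extra approximation stage (the form-norm convergence of the truncations) but buys a more constructive approximating sequence, built directly from $u$ rather than from an $\ell^2$-approximation of $(L_{\bar{A}}+1)v$.
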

 
 \begin{proof}
Given $v\in D(L_{\bar{A}})$ consider $f=(L_{\bar{A}}+1)v\in \ell^2({\bar{A}},m)$ and 
 choose a sequence $(f_n)_n\subset\ell^\infty({\bar{A}})$ such that $\lim_n\|f_n-f\|_{\ell^2({\bar{A}},m)}=0$. 
 Then we have $(L_{\bar{A}}+1)^{-1}f_n\in\mathcal{N}$, and by continuity ot the resolvent operators, $\lim_n\|(L_{\bar{A}}+1)^{-1}(f_n-f)\|_{\ell^2({\bar{A}},m)}=0$. Therefore
 \begin{align*}\|(L_{\bar{A}}+1)^{-1}(f_n-f)\|_{Q_{\bar{A}}}^2=&\|(L_{\bar{A}}+1)^{-1}(f_n-f)\|_{\ell^2({\bar{A}},m)}\\&+\langle f-f_n,(L_{\bar{A}}+1)^{-1}(f_n-f)\rangle_{\ell^2({\bar{A}},m)}
 \end{align*} 
 tends to zero as $n\to\infty$, showing that $\mathcal{N}$ is $\|\cdot\|_{Q_{\bar{A}}}$-dense in $D(L_{\bar{A}})$ and therefore also in $D(Q_{\bar{A}})$.
 \end{proof}

 \begin{lemma}\label{lemma:char_solution_vertex_boundary}
 Let $\varphi\in\ell^\infty(\partial_v A)$ such that  $\sum_{x\in\partial_v A} \varphi(x) m(x)=0$. 
 Then $u\in \ell^\infty({\bar{A}})$ is a solution of the Neumann problem for $\varphi$ if for every $v\in\mathcal{N}$ one has 
 $$\langle u,L_{\bar{A}}v\rangle_{\ell^2({\bar{A}},m)}=\sum_{x\in \partial_v A} \varphi(x) v(x)m(x).$$ 
 \end{lemma}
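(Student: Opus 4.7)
My plan is to interpret the hypothesis as a weak identity that identifies $u$ as a strong solution of an equation for the self-adjoint operator $L_{\bar A}$, and then to read off the two pointwise conditions defining the Neumann problem.

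First I would observe that $\mathcal{N}$ is a core for $L_{\bar A}$ in the graph norm. The resolvent construction in the proof of Lemma \ref{lemma:sufficientvertex} shows that $(L_{\bar A}+1)^{-1}(\ell^\infty(\bar A)) \subseteq \mathcal{N}$, and since $m$ is finite, $\ell^\infty(\bar A)$ is dense in $\ell^2(\bar A,m)$, so $\mathcal{N}$ is dense in $D(L_{\bar A})$ with respect to the graph norm. Next I would extend the given identity from $v\in\mathcal{N}$ to all $v\in D(L_{\bar A})$: the right-hand side is controlled by $\|\varphi\|_\infty \sqrt{m(\partial_v A)}\,\|v\|_{\ell^2(\bar A,m)}$ by Cauchy--Schwarz together with $m(\partial_v A)<\infty$, so both sides are continuous in $v$ with respect to the graph norm, and approximation inside $\mathcal{N}$ yields the extension.

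Once this is in hand, the estimate $|\langle u, L_{\bar A}v\rangle_{\ell^2(\bar A,m)}|\le C\|v\|_{\ell^2(\bar A,m)}$ valid for all $v\in D(L_{\bar A})$, together with self-adjointness of $L_{\bar A}$, gives $u\in D(L_{\bar A})$ and $L_{\bar A}u=\widehat\varphi$, where $\widehat\varphi$ denotes $\varphi$ extended by zero from $\partial_v A$ to $\bar A$. Since $m$ is strictly positive on $\bar A$, this equality is pointwise. Because $L_{\bar A}$ is a restriction of $\widetilde L_{\bar A}$, restricting the identity to $x\in A$ gives $\Delta_A u(x)=\widetilde L_{\bar A}u(x)=0$, and restricting to $y\in\partial_v A$ gives $\widetilde L_{\bar A}u(y)=\varphi(y)$, which by the pointwise formula from the preceding lemma equals $\frac{\partial}{\partial n}u(y)$. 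Hence $u$ solves the Neumann problem for $\varphi$.

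The only non-routine step is the density/core step; once $\mathcal{N}$ is shown to be a core for $L_{\bar A}$, the remainder is a standard passage between the self-adjoint operator formulation and the pointwise Laplacian and normal derivative statements. Note that the compatibility condition $\sum_{\partial_v A}\varphi\,m=0$ is not actually used in this lemma; it serves only to ensure consistency with Proposition \ref{proposition:Existence_solution_finite_graph}.
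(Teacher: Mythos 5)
Your proof is correct, but it takes a genuinely different route from the paper's. The paper first shows $u\in D(Q_{\bar A})$ by testing the hypothesis against the resolvent regularizations $\alpha(L_{\bar A}+\alpha)^{-1}u$ of $u$ itself, invoking the $\|\cdot\|_\infty$-contractivity of the resolvent and the criterion $\sup_\alpha\alpha\langle u,u-\alpha(L_{\bar A}+\alpha)^{-1}u\rangle<\infty$ from \cite[Lemma 1.3.4]{FOT}; it then extends the weak identity $Q_{\bar A}(u,v)=\int_{\partial_v A}\varphi v\,dm$ from $\mathcal N$ to all of $D(Q_{\bar A})$ by $\|\cdot\|_{Q_{\bar A}}$-density and reads off $\Delta_A u=0$ by testing with $1_x$. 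You instead show that $\mathcal N$ is a graph-norm core for $L_{\bar A}$ and conclude directly, via the adjoint characterization, that $u\in D(L_{\bar A})$ with $L_{\bar A}u=\widehat\varphi$ (the zero extension of $\varphi$). Both arguments are sound; yours is arguably more economical as pure operator theory and gives the stronger conclusion $u\in D(L_{\bar A})$ in one step. What it exploits, however, is that in the vertex-boundary setting the boundary carries positive $m$-mass, so the right-hand side $\sum_{\partial_v A}\varphi v\,m$ is an $\ell^2(\bar A,m)$-pairing with an element $\widehat\varphi\in\ell^2(\bar A,m)$; the paper's resolvent/form-domain route is the one that survives the passage to Lemma~\ref{lemma:char_solution}, where the boundary functional $\int_{\partial_R X}\varphi\widehat v\,d\mu$ lives on a set of $\widehat m$-measure zero and cannot be written as $\langle w,v\rangle_{\ell^2(X,m)}$. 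Two small points: your appeal to the pointwise formula for $\frac{\partial}{\partial n}u$ presupposes that $u$ admits a normal derivative, which that lemma does not provide; you should instead note that $u\in D(L_{\bar A})$ together with (\ref{E:saop}) gives $Q_{\bar A}(u,v)=\langle\widehat\varphi,v\rangle_{\ell^2(\bar A,m)}=\langle\Delta_A u,v\rangle_{\ell^2(A,m)}+\langle\varphi,v\rangle_{\ell^2(\partial_v A,m)}$, which is exactly (\ref{E:defnormal}) and identifies $\varphi$ as the normal derivative. And your observation that the centering hypothesis is unused is accurate (in both proofs); in fact it is implied by the main hypothesis, since $\mathbf 1_{\bar A}\in\mathcal N$ and $L_{\bar A}\mathbf 1_{\bar A}=0$.
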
 
 
 \begin{proof}
 We first show $u\in D(Q_{\bar{A}})$. For every $\alpha>0$ we observe $$L_{\bar{A}}\alpha(L_{\bar{A}}+\alpha)^{-1} u=\alpha u-\alpha^2(L_{\bar{A}}+\alpha)^{-1} u\in \ell^\infty({\bar{A}}),$$ 
 hence, $\alpha(L_{\bar{A}}+\alpha)^{-1}u\in\mathcal{N}$. The hypothesis on $u$ then implies that 
 \begin{align*} \int_{\partial_v A} \varphi \alpha(L_{\bar{A}}+\alpha)^{-1}u~dm&
 =\langle u,L_{\bar{A}}\alpha(L_{\bar{A}}+\alpha)^{-1}u\rangle_{\ell^2({\bar{A}},m)}\\
 &=\alpha\langle u,u-\alpha(L_{\bar{A}}+\alpha)^{-1}u\rangle_{\ell^2({\bar{A}},m)}\end{align*}  for
 every $\alpha>0$. Because $\alpha(L_{\bar{A}}+\alpha)^{-1}$ is $\|\cdot\|_\infty$-contractive, c.f. \cite[Theorem 1.4.1]{FOT} or \cite[Theorem 1.4.1]{D89}, the left hand side is bounded by $m(\partial_v A)\|\varphi\|_\infty\|u\|_\infty$, hence 
 $\lim\limits_{\alpha\to\infty}\alpha\langle u,u-\alpha(L_{\bar{A}}+\alpha)^{-1}u\rangle_{\ell^2({\bar{A}},m)}<+\infty$. Consequently $u\in D(Q_{\bar{A}})$ as claimed, see \cite[Lemma 1.3.4]{FOT}. By (\ref{E:saop}),
\begin{equation}\label{E:bdterms}
Q_{\bar{A}}(u,v)=\int_{\partial_v A} \varphi v~dm
\end{equation}
for every $v\in \mathcal{N}$. For general $v\in D(Q_{\bar{A}})$ let $(v_n)_n\subset \mathcal{N}$ be such that $\lim_n\|v_n-v\|_{Q_{\bar{A}}}=0$. Then 
$\lim_n Q_{\bar{A}}(u,v_n)= Q_{\bar{A}}(u,v)$ and since $\varphi\in \ell^\infty(\bar{A},m)$ also $\lim_n \int_{\partial_v A}|\varphi v_n -\varphi v |~dm= 0$,
what shows that (\ref{E:bdterms}) holds for all $v\in D(Q_{\bar{A}})$. Finally, note that for every $x\in A$ we have 
\[\Delta_A u(x)=\frac{1}{m(x)}Q_{\bar{A}}(u,1_x)=\frac{1}{m(x)}\int_{\partial_v A} \varphi 1_x~dm=0.\qedhere\]
 \end{proof}

Our main interest is to provide representation formulas for solutions to the Neumann problem in terms of heat kernels, green functions and local times, c.f. \cite[Theorem 3.10.]{brosamler}, \cite[Theorem 5.3.]{bass_hsu}, the existence of solutions will follow as a by-product. 

Let $Y=(Y_t)_{t\geq 0}$ be the Markov process (uniquely) associated
with $Q_{\bar{A}}$, cf. Appendix \ref{appendix:Dirichlet}. Then $e^{-tL_{\overline{{A}}}} f(x)=\IE_x f(Y_t)$ for every $f\in \ell^2(\bar{A},m)$ and all $x\in \bar{A}$. The process $(Y_t)_{t\geq 0}$ is a continuous time Markov process with state space $\bar{A}$ and waiting time distribution at point $x\in\bar{A}$ being exponential with parameter $\frac{1}{m(x)}\sum_{y\in\bar{A}}b_{\overline {A}}(x,y)$ and single step transition probabilities $\frac{b_{\bar{A}}(x,y)}{\sum_{z\in\bar{A}} b_{\bar{A} (x,y)}}$. 

\begin{definition}
 We define $L_t:=\int_0^t \mathbf{1}_{\{Y_s\in \partial_v A\}} ds$, $t\geq 0$ and call the process $(L_t)_{t\geq 0}$ the \emph{local time} of $Y$ on the vertex boundary $\partial_v A$. 
\end{definition}
Given any finite time interval, the probability that the process $(L_t)_{t\geq 0}$ jumps only finitely many times in this interval is one.
Consequently the function $(L_t)_{t\geq 0}$ is almost everywhere differentiable with derivative $(L_s)':=\frac{d}{dt}|_{t=s}L_t=\mathbf{1}_{\{Y_s\in \partial_v A\}}$.
The process $(L_t)_{t\geq 0}$ is a positive continuous additive functional of $Y$, it is in Revuz correspondence with the measure $m|_{\partial_v A}$, see \cite[Section 5.1, in particular Theorem 5.1.4]{FOT}. This fact can be expressed using the heat kernel $\left\lbrace p_t(\cdot,\cdot)\right\rbrace_{t>0}$ for $e^{-tL_{\bar{A}}}$, for its definition see formula (\ref{E:hk}) in Appendix \ref{appendix:Dirichlet}.

\begin{lemma}\label{lemma:Revuz_simple}
  Let $\varphi\in \ell^\infty(\partial_v A)$. For every $t\geq 0$ the function 
  \begin{equation}\label{E:ut}
  u_t(x):=\IE_x\int_0^t \varphi(Y_s)~dL_s
  \end{equation} 
  is bounded, and $u_t(x)= \sum_{y\in\partial_v A} \int_0^t \varphi(y) p_s(x,y)m(y)~ds$ holds for every $x\in\overline{A}$.
\end{lemma}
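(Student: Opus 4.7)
My plan is to unwind the defining integrand, apply Fubini, and then use the heat kernel representation of the semigroup.

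First I would use the observation made in the paragraph just before the lemma: because $Y$ makes only finitely many jumps in $[0,t]$ almost surely, the function $s\mapsto L_s$ is absolutely continuous with derivative $\mathbf{1}_{\{Y_s\in\partial_v A\}}$ almost everywhere. Consequently the Stieltjes integral against $dL_s$ reduces to an ordinary Lebesgue integral, and for each $x\in\bar A$
\[
u_t(x)=\IE_x\int_0^t\varphi(Y_s)\,\mathbf{1}_{\{Y_s\in\partial_v A\}}\,ds.
\]
Since $\varphi$ is bounded and $\mathbf{1}_{\{Y_s\in\partial_v A\}}\le 1$, the integrand on the right is dominated by $\|\varphi\|_\infty$, which is integrable against $ds\otimes\P_x$ on $[0,t]\times\Omega$. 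Fubini therefore permits swapping $\IE_x$ and $\int_0^t\cdots ds$, giving
\[
u_t(x)=\int_0^t \IE_x\bigl[\varphi(Y_s)\mathbf{1}_{\partial_v A}(Y_s)\bigr]\,ds.
\]

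Next I would plug in the heat kernel representation $e^{-sL_{\bar A}}f(x)=\IE_x f(Y_s)=\sum_{y\in\bar A}f(y)p_s(x,y)m(y)$ valid for bounded $f$ (recorded in Appendix~\ref{appendix:Dirichlet} via formula~(\ref{E:hk})), with $f=\varphi\mathbf{1}_{\partial_v A}$ (extended by zero to $\bar A$). The indicator restricts the sum to $\partial_v A$, yielding
\[
\IE_x\bigl[\varphi(Y_s)\mathbf{1}_{\partial_v A}(Y_s)\bigr]=\sum_{y\in\partial_v A}\varphi(y)p_s(x,y)m(y),
\]
and substituting this back into the previous display gives the stated identity. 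Boundedness is then immediate: because $L_t\le t$ pathwise, $|u_t(x)|\le \|\varphi\|_\infty\,\IE_x L_t\le t\|\varphi\|_\infty$ uniformly in $x\in\bar A$.

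There is no real obstacle here; the only point demanding a moment of care is the justification of Fubini and of the pointwise heat kernel formula for the bounded, possibly non-$\ell^2$, function $\varphi\mathbf{1}_{\partial_v A}$. The former is handled by the trivial domination above, and the latter by the fact that $\partial_v A\subset\bar A$ has finite measure (Assumption~\ref{A:basicdiscrete}), so $\varphi\mathbf{1}_{\partial_v A}\in\ell^2(\bar A,m)\cap\ell^\infty(\bar A)$ and the semigroup identity applies without qualification.
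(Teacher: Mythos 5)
Your proposal is correct and follows essentially the same route as the paper: reduce the $dL_s$-integral to a Lebesgue integral with integrand $\varphi(Y_s)\mathbf{1}_{\{Y_s\in\partial_v A\}}$, apply Fubini, and invoke the heat kernel representation of the semigroup. The paper merely interleaves these steps differently (it first splits the indicator into a sum over the points of $\partial_v A$ and then applies Fubini--Tonelli termwise), which is an inessential variation.
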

\begin{proof}We have 

\begin{multline}
                       u_t(x)=\IE_x\int_0^t \varphi(Y_s)(L_s)'~ds
                       =\IE_x\int_0^t \varphi(Y_s)\mathbf{1}_{\{Y_s\in \partial_v A\}}~ds
                       =\sum_{y\in\partial_v A} \IE_x\int_0^t \varphi(Y_s)\mathbf{1}_{\{Y_s=y\}}~ds\\
                       =\sum_{y\in\partial_v A} \IE_x\int_0^t \varphi(y)\mathbf{1}_{\{Y_s=y\}}~ds
                       =\sum_{y\in\partial_v A} \varphi(y)\IE_x\int_0^t \mathbf{1}_{\{Y_s=y\}}~ds,\notag
\end{multline}
where the first equality follows since $L_\cdot$ is pathwise almost everywhere differentiable. Fubini-Tonelli yields $\IE_x\int_0^t \mathbf{1}_{\{Y_s=y\}}~ds=\int_0^t \IE_x \mathbf{1}_{\{Y_s=y\}}~ds$, and using $p_t(x,y)=\frac{1}{m(y)}\IE_x \mathbf{1}_{\{Y_t=y\}}$ we obtain
\begin{multline}
 u_t(x)=\sum_{y\in\partial_v A} \varphi(y)\int_0^t e^{-s L_{\bar{A}}}1_y(x)~ds
 =\sum_{y\in\partial_v A} \int_0^t\varphi(y) e^{-s L_{\bar{A}}}1_y(x)~ds\notag\\
 =\sum_{y\in\partial_v A} \int_0^t \varphi(y) p_s(x,y)m(y)~ds,\notag
 \end{multline}
which admits the bound $|u_t(x)|\leq t\|\varphi\|_\infty$.  
\end{proof}

To establish representation formulas we adapt the method of \cite {bass_hsu} to graphs, and to do so, we make one more standing assumption. It is always satisfied if $\bar{A}$ is a finite set. See Appendix \ref{appendix:Dirichlet} for further information.

\begin{assumption}\label{A:ultravertex}
The semigroup $(e^{-tL_{\overline{{A}}}})_{t>0}$ is ultracontractive.
\end{assumption}

The \emph{Green kernel} $g:\bar{A}\times\bar{A}$ is defined by
\begin{equation}\label{E:defgreen}
g(x,y)=\int_0^\infty \left(p_t(x,y)-\frac{1}{m(\bar{A})}\right)dt,\quad x,y\in\bar{A}.
\end{equation}  
The existence of this integral is a consequence of Assumption \ref{A:ultravertex}, see formula (\ref{E:mixing}) in Appendix \ref{appendix:Dirichlet}.

We can now state the main result of this section, recall that Assumptions \ref{A:basicdiscrete} and \ref{A:ultravertex} must be satisfied.

\begin{theorem}\label{theoremvertexboundary}
 Let $\varphi\in \ell^\infty(\partial_v A)$ be such that $\sum_{x\in \partial_v A} \varphi(x) m(x)=0$. 
 \begin{itemize} 
  \item [(i)] There is a unique function $u:\bar{A}\to\IR$ that solves the Neumann problem for $\varphi$ and satisfies $\sum_{x\in \bar{A}} u(x) m(x)=0$.
  \item[(ii)] For this function $u$  and for every $x\in\bar{A}$ we have 
  $$u(x)=\lim\limits_{t\to\infty}\IE_x\int_0^t \varphi(X_s)~dL_s=\int_0^\infty \sum_{y\in\partial_v A} \varphi(y) p_s(x,y)m(y)~ds
 = \sum_{y\in\partial_v A} \varphi(y) g(x,y)m(y). $$
 \end{itemize}
 \end{theorem}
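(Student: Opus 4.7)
\medskip

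The plan is to use Lemma~\ref{lemma:char_solution_vertex_boundary} to verify that the explicit candidate $u(x) := \sum_{y\in \partial_v A} \varphi(y)\, g(x,y)\, m(y)$ is a Neumann solution, to identify it with the two probabilistic/semigroup expressions, and then to settle uniqueness using Lemma~\ref{lemma:uniqueness_vertex} together with the normalization $\sum_{x\in \bar{A}} u(x)m(x)=0$. The central analytic input is the uniform exponential convergence to equilibrium that Assumption~\ref{A:ultravertex} provides via spectral considerations in the appendix: there exist $C,\lambda>0$ with $|p_t(x,y) - 1/m(\bar{A})| \le C e^{-\lambda t}$, which is precisely what makes $g(x,y)$ in (\ref{E:defgreen}) well-defined.

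I would first take $u_t$ from (\ref{E:ut}) and use the centering $\sum_{y\in\partial_v A} \varphi(y)m(y)=0$ to rewrite, for every $x\in\bar{A}$,
\[
u_t(x) \;=\; \sum_{y\in\partial_v A} \varphi(y)\, m(y) \int_0^t \Bigl(p_s(x,y) - \tfrac{1}{m(\bar{A})}\Bigr) ds.
\]
Exponential mixing then dominates the integrand uniformly in $x$, so $u_t(x)\to u(x)$ as $t\to\infty$, with $\sup_x |u(x)| < \infty$. This simultaneously yields the first two representation formulas in part (ii), and Lemma~\ref{lemma:Revuz_simple} supplies the link to $\mathbb{E}_x \int_0^t \varphi(Y_s)\,dL_s$.

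Next I would verify the hypothesis of Lemma~\ref{lemma:char_solution_vertex_boundary}. Fix $v\in\mathcal{N}$, so that $L_{\bar A} v\in\ell^\infty(\bar A)$, hence (since $m$ is finite) $L_{\bar A} v\in\ell^1(\bar A,m)$. Using symmetry of $p_s$ and the identity $e^{-sL_{\bar A}} L_{\bar A} v = -\tfrac{d}{ds} e^{-sL_{\bar A}} v$, a Fubini computation gives
\[
\langle u_t, L_{\bar A} v\rangle_{\ell^2(\bar A,m)} \;=\; \sum_{y\in\partial_v A} \varphi(y)\, m(y)\,\bigl(v(y) - e^{-tL_{\bar A}} v(y)\bigr).
\]
By ultracontractivity $e^{-tL_{\bar A}} v \to \tfrac{1}{m(\bar A)} \sum_{z} v(z) m(z)$ uniformly, and the centering of $\varphi$ kills this constant contribution, leaving $\sum_{y\in\partial_v A} \varphi(y) v(y) m(y)$. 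On the other hand the uniform bound on $u_t$ and the $\ell^1$-summability of $L_{\bar A} v$ against $m$ allow dominated convergence to pass $\langle u_t, L_{\bar A} v\rangle \to \langle u, L_{\bar A} v\rangle$. Lemma~\ref{lemma:char_solution_vertex_boundary} then certifies that $u$ solves the Neumann problem for $\varphi$.

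For part (i), existence is now in hand. Uniqueness up to an additive constant is Lemma~\ref{lemma:uniqueness_vertex}, so it remains to show that the constructed $u$ already satisfies $\sum_{x\in\bar A} u(x) m(x) = 0$, which uniquely selects the representative. By symmetry of $g$ this amounts to checking $\sum_{x\in\bar A} g(x,y) m(x) = 0$ for each $y$; interchanging sum and integral (justified again by exponential mixing) reduces this to $\sum_{x} p_t(x,y) m(x) = 1$, which follows from $L_{\bar A} \mathbf{1}_{\bar A} = 0$, and $\sum_x \tfrac{1}{m(\bar A)} m(x) = 1$. The main obstacle throughout is the quantitative passage to the limit $t\to\infty$ in both $u_t \to u$ and $e^{-tL_{\bar A}} v \to $ equilibrium; once the uniform exponential convergence from Assumption~\ref{A:ultravertex} is in place, every remaining step is a bounded convergence or symmetry argument.
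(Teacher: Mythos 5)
Your proposal is correct and follows essentially the same route as the paper's proof: it rewrites $u_t$ using the centering of $\varphi$, applies the exponential mixing bound (\ref{E:mixing}) to get uniform convergence to $u$ and the Green-kernel formula, verifies the hypothesis of Lemma~\ref{lemma:char_solution_vertex_boundary} via self-adjointness and the heat equation, and concludes with Lemma~\ref{lemma:uniqueness_vertex}. The only cosmetic difference is that you place the normalization check $\sum_{x\in\bar A}u(x)m(x)=0$ at the end (via $\sum_{x}g(x,y)m(x)=0$), which is the same computation the paper performs mid-proof using (\ref{E:stochcompl}).
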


The first equality in (ii) is a graph version of \cite[Theorem 5.3]{bass_hsu} and \cite[formula (1.1)]{brosamler}. Our proof follows these references.

\begin{proof}
We first show that $u(x):=\lim_{t\to\infty} u_t(x)$ exists. By the previous lemma and since $\varphi$ is centered, 
 \begin{align*}
  u_t(x)=\sum_{y\in\partial_v A} \int_0^t \varphi(y) p_s(x,y)m(y)~ds=\sum_{y\in\partial_v A} \int_0^t \varphi(y) \left(p_s(x,y)-\frac{1}{m(\bar{A})}\right)m(y)~ds,
 \end{align*}
and given $0<s<t$ we can use (\ref{E:mixing}) to see that
\begin{align*}
 |u_t(x)-u_s(x)|&\leq \sum_{y\in\partial_v A} \int_s^t |\varphi(y)|\left|p_r(x,y)-\frac{1}{m(\bar{A})}\right|m(y)~dr\\
 &\leq c_1 \sum_{y\in\partial_v A}|\varphi(y)|m(y)\int_s^te^{-c_2 r}~dr,\notag
\end{align*}
what becomes arbitrarily small if only $s,t$ are chosen large enough. Hence, the limit function $u$ exists, is bounded, and satisfies $$u(x)=\int_0^\infty \sum_{y\in\partial_v A} \varphi(y) p_s(x,y)m(y)~ds$$ for every $x\in\bar{A}$.
By (\ref{E:defgreen}) and since $\varphi$ is centered, we have $u(x)=\sum_{y\in\partial_v A} \varphi(y) g(x,y)m(y)$ for every $x\in\bar{A}$, and integrating,
\[\sum_{x\in \bar{A}} u(x) m(x)=\int_0^\infty\sum_{y\in \partial_v A}\varphi(y)\sum_{x\in \bar{A}} p_s(x,y)m(x)m(y)~ds.
\]
By (\ref{E:stochcompl}) and since $\varphi$ is centered, we see that $\sum_{x\in \bar{A}} u(x) m(x)=0$. 
We show that $u$ solves the Neumann problem by an application of Lemma~\ref{lemma:char_solution_vertex_boundary}. Let $v\in \mathcal{N}$ be arbitrary. 
We have to show $\langle u,L_{\bar{A}}v\rangle_{\ell^2({\bar{A}},m)}=\sum_{x\in \partial_v A} \varphi(x) v(x)m(x)$.
The uniform convergence of $u_t$ to $u$ yields
$$\langle u,L_{\bar{A}}v\rangle_{\ell^2({\bar{A}},m)}=\lim_{t\to\infty}\langle u_t,L_{\bar{A}}v\rangle_{\ell^2({\bar{A}},m)}.$$
On the other hand
\begin{align*}
 \langle u_t,L_{\bar{A}}v\rangle_{\ell^2({\bar{A}},m)}&=\sum_{y\in\partial_v A} \int_0^t \varphi(y)\sum_{x\in\bar{A}} p_s(x,y)L_{\bar{A}}v(x)m(x)m(y)~ds\\
 &=\sum_{y\in\partial_v A} \int_0^t \varphi(y)\langle p_s(\cdot,y),L_{\bar{A}}v\rangle_{\ell^2(\bar{A},m)}m(y)~ds.
\end{align*}
Using of the self-adjointness of $L_{\bar{A}}$ and the heat equation (\ref{E:heateqforp}),
$$\langle p_s(\cdot,y),L_{\bar{A}}v\rangle_{\ell^2(\bar{A},m)}=-\langle \frac{d}{dt}p_t(\cdot,y)|_{t=s}  ,v\rangle_{\ell^2(\bar{A},m)},$$
and therefore
\begin{align*}
 \langle u_t,L_{\bar{A}}v\rangle_{\ell^2({\bar{A}},m)}
 &=-\sum_{y\in\partial_v A} \int_0^t \sum_{x\in\bar{A}} \varphi(y) (\frac{d}{dt} p_t(\cdot,y)|_{t=s} )(x) v(x)m(x)m(y)~ds\\
 &=\sum_{y\in\partial_v A} \sum_{x\in\bar{A}} \varphi(y)  (p_0(x,y)-p_t(x,y))v(x)m(x)m(y).
\end{align*}
%
By Lebesgue's theorem and (\ref{E:mixing}), 
$$
\lim_{t\to\infty}\sum_{y\in\partial_v A}\sum_{x\in\bar{A}} \varphi(y)p_t(x,y)v(x)m(x)m(y)
=\sum_{y\in\partial_v A} \varphi(y)m(y)\frac{1}{m(\bar{A})}\sum_{x\in\bar{A}} v(x)m(x)=0,
$$
so that
\[\langle u,L_{\bar{A}}v\rangle_{\ell^2({\bar{A}},m)}= \sum_{y\in\partial_v A} \sum_{x\in\bar{A}} \varphi(y)p_0(x,y)v(x)m(x)m(y)=\sum_{y\in\partial_v A} \varphi(y)v(y)m(y).\]
The result now follows from Lemma~\ref{lemma:char_solution_vertex_boundary}.
%
%
%
\end{proof}

\section{Neumann problems for canonically compactifiable graphs}\label{section:brosamler_royden}

We consider canonically compactifiable graphs, a class of graphs introduced and studied in \cite{canon}, and investigate Neumann problems with respect to the abstract Royden boundary. Some
information on this boundary can be found in Appendix~\ref{appendix:royden}, for more details we refer to  \cite{Soardi}.

\begin{definition}
 A connected graph $b$ over a countably infinite set $X$ is called \emph{canonically compactifiable} if the inclusion $\widetilde{D}\subseteq \ell^\infty(X)$ holds. 
\end{definition}

From now on we assume the following.

\begin{assumption}\label{A:canon}
The graph $b$ over $X$ is canonically compactifiable and $m$ is a finite measure on $X$ such that $(X,m)$ is a finite discrete measure space.
\end{assumption}

Under this assumption we have $D(Q)=\widetilde{D}$, and according to \cite[Theorem 5.1.]{canon}  the semigroup $(e^{-tL})_{t>0}$ is ultracontractive with rate function $\gamma$ as in (\ref{E:ratefct}), Appendix \ref{appendix:Dirichlet}.

Let $R$ be the Royden compactification of $X$, see Appendix~\ref{appendix:royden}, and denote the continuous extension of every $f\in D(Q)$ to $R$ by $\widehat{f}$. 
On the Borel-$\sigma$-field $\mathcal{B}(R)$ we define a measure 
$\widehat{m}$ via 
\begin{equation}\label{E:transferm}
\widehat{m}(A):=m(X\cap A).
\end{equation}
It is easy to see that $\widehat{m}$ is a Radon-measure with full support. 

The operation $D(Q)\to C(R),$ $f\mapsto \widehat{f}$ can be used to introduce a natural Dirichlet form on $L^2(R,\widehat{m})$, \cite{Soardi, KLSW17}, see also \cite{HKT, H} for related results. To do so denote by $[\widehat{f}]$ the equivalence class of functions that coincide $\widehat{m}$-a.e. with $\widehat{f}$. We can define a Dirichlet form $\widehat{Q}$ on $L^2(R,\widehat{m})$ by $$D(\widehat{Q})=\{[\widehat{f}]\colon  f\in D(Q)\},\quad\widehat{Q}([\widehat{f}])=Q(f).$$
By construction, $\widehat{Q}$ is regular, cf. Appendix~\ref{appendix:Dirichlet} for the definition. Notationally we will not distinguish between $\widehat{f}$ and $[\widehat{f}]$. We write $\widehat{L}$ to denote the generator of $\widehat{Q}$.

Canonically compactifiable graphs satisfy a strong Sobolev embedding.
\begin{lemma}\label{lemma:embedding_Neumann_supnorm}
There is a constant $c>0$ such that $\|u\|_\infty\leq c\|u\|_Q$ holds for every $u\in D(Q)$. In particular, $\|\widehat{u}\|_\infty\leq c\|\widehat{u}\|_{\widehat{Q}}$ holds for every $\widehat{u}\in D(\widehat{Q})$.
\end{lemma}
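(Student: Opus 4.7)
The plan is to deduce the embedding from the uniform boundedness principle, using Assumption \ref{A:canon} precisely to verify the pointwise boundedness hypothesis, and then to transfer the inequality to $R$ via density of $X$.

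First I would set up the functional framework. The form $Q$ is a Dirichlet form, hence closed, so $(D(Q),\|\cdot\|_Q)$ is a Hilbert space. For each $x\in X$ the point evaluation $\delta_x\colon D(Q)\to\IR$, $u\mapsto u(x)$, is a bounded linear functional on $(D(Q),\|\cdot\|_Q)$, because
\[|u(x)|\;\le\;\frac{1}{\sqrt{m(x)}}\,\|u\|_{\ell^2(X,m)}\;\le\;\frac{1}{\sqrt{m(x)}}\,\|u\|_Q,\]
using $m(x)>0$.

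Next I would invoke Assumption \ref{A:canon}. Under this assumption we have $D(Q)=\widetilde{D}\subseteq \ell^\infty(X)$, which means that for every fixed $u\in D(Q)$ the family $\{\delta_x(u)\}_{x\in X}$ is bounded. Hence the family $\{\delta_x\}_{x\in X}$ is pointwise bounded on the Banach space $(D(Q),\|\cdot\|_Q)$. The Banach--Steinhaus theorem then yields a constant $c>0$ with $\|\delta_x\|_{D(Q)^*}\le c$ uniformly in $x\in X$. Taking the supremum over $x$ gives $\|u\|_\infty\le c\|u\|_Q$ for all $u\in D(Q)$, which is the first assertion.

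For the ``in particular'' part I would compare norms under the extension map $u\mapsto\widehat u$. Since $X$ is dense in the Royden compactification $R$ and $\widehat u\in C(R)$ is the continuous extension of $u$, one has $\|\widehat u\|_{\infty}=\|u\|_\infty$. From (\ref{E:transferm}) the measure $\widehat m$ is supported on $X$ (as $\widehat m(R\setminus X)=m(\emptyset)=0$), so $\|\widehat u\|_{L^2(R,\widehat m)}=\|u\|_{\ell^2(X,m)}$, and by the very definition of $\widehat Q$ we have $\widehat Q(\widehat u)=Q(u)$; therefore $\|\widehat u\|_{\widehat Q}=\|u\|_Q$. Combining these equalities with the first inequality gives $\|\widehat u\|_\infty\le c\|\widehat u\|_{\widehat Q}$ for every $\widehat u\in D(\widehat Q)$.

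The only genuinely substantive step is recognizing that Assumption \ref{A:canon} is exactly the pointwise boundedness hypothesis needed for Banach--Steinhaus; everything else is routine. The main pitfall would be attempting a direct estimate of $|u(x)|$ by $\|u\|_Q$ for a single $x$ (which only gives the $x$-dependent constant $m(x)^{-1/2}$ from connectedness and the Poincar\'e-type estimates), whereas the uniform constant $c$ is obtained only via the qualitative inclusion $D(Q)\subseteq\ell^\infty(X)$ combined with a soft argument.
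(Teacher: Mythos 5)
Your proof is correct. For the second assertion it coincides with the paper's argument (density of $X$ in $R$ gives $\|\widehat u\|_\infty=\|u\|_\infty$, and $\widehat Q(\widehat u)=Q(u)$ together with $\|\widehat u\|_{L^2(R,\widehat m)}=\|u\|_{\ell^2(X,m)}$ gives $\|\widehat u\|_{\widehat Q}=\|u\|_Q$). For the first assertion the paper gives no argument at all: it simply cites \cite[Lemma 4.2, Lemma 5.3]{canon}. You instead supply a self-contained proof, and it goes through: $(D(Q),\|\cdot\|_Q)$ is a Hilbert space because $Q$ is closed; each $\delta_x$ is continuous on it with norm at most $m(x)^{-1/2}$ since $m(x)>0$; and the defining inclusion $\widetilde D=D(Q)\subseteq\ell^\infty(X)$ of canonical compactifiability is exactly pointwise boundedness of the family $\{\delta_x\}_{x\in X}$, so Banach--Steinhaus yields the uniform constant. (Equivalently one could apply the closed graph theorem to the inclusion $D(Q)\hookrightarrow\ell^\infty(X)$, which is closed because $\|\cdot\|_Q$-convergence implies pointwise convergence via the $\delta_x$.) What the soft argument buys is independence from the machinery of \cite{canon}, whose cited lemmas proceed through the resistance metric and give more quantitative information (a concrete constant in terms of the diameter of $X$ in that metric and of $m$); your version yields only the existence of $c$, which is all the lemma asserts and all the paper uses. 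Your closing remark is also on point: a direct single-point estimate only produces the $x$-dependent constant $m(x)^{-1/2}$, and the uniformity genuinely requires the qualitative inclusion plus a Baire-category-type argument.
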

\begin{proof}
 The first inequality is proven in \cite[Lemma 4.2., Lemma 5.3.]{canon}. The second inequality follows 
 by $\|u\|_\infty=\|\widehat{u}\|_\infty$ (as $X$ is dense in $R$) and $Q(u)=\widehat{Q}(\widehat{u})$ (by definition).
\end{proof}

Let $\left\lbrace p_t(x,y)\right\rbrace_{t>0}$ be the heat kernel for the semigroup $(e^{-tL})_{t>0}$, cf. Appendix \ref{appendix:Dirichlet}. For every $t>0$ the function $p_t(\cdot,\cdot)$ has a continuous extension to $R\times R$. To see this, note that (by symmetry) for fixed $x\in X$ both 
$ p_t(x,\cdot)$ and $ p_t(\cdot,x)$ are elements of $D(Q)$ and therefore have unique continuous extensions $\widehat{p_t(x,\cdot)}$ and $\widehat{p_t(\cdot,x)}$ to $R$. By (\ref{E:CK}) and dominated convergence there is a unique continuous extension of $(x,y)\mapsto p_t(x,y)$ to a symmetric function on $R\times R$, we denote it by 
\[(x,y)\mapsto \widehat{p}_t(x,y).\] 
Lemma \ref{lemma:embedding_Neumann_supnorm} can be used to see that the $\widehat{p}_t(\cdot,\cdot)$ determine a Feller transition function, cf. Appendix~\ref{appendix:revuz}. For any $x\in R$ consider the Borel measure 
\begin{equation}\label{E:defPhat}
\widehat{P}_t(x,A)=\int_{A} \widehat{p}_t(x,y)~\widehat{m}(dy), \quad A\in\mathcal{B}(R).
\end{equation}
The next lemma implies that for $x\in X$ we have $e^{-tL}\mathbf{1}_A(x)=\widehat{P}_t(x,X\cap A)$,  $\quad A\in\mathcal{B}(R)$, which is similar to (\ref{E:transferm}). However, (\ref{E:defPhat}) makes sense for any $x\in R$ and in this sense (\ref{E:defPhat}) 
is a slight abuse of notation.

\begin{lemma}\label{lemma:heat_kernel_feller}
We observe the following.
\begin{itemize}
 \item [(i)] The kernels $\widehat{P}_t$ form a Feller transition function $(\widehat{P}_t)_{t>0}$ on $X$. It satisfies $\widehat{P}_t\mathbf{1}=\mathbf{1}$ for all $t>0$  and it has the strong Feller property. 
 \item[(ii)] The Feller transition function $(\widehat{P}_t)_{t>0}$ is in correspondence with $\widehat{Q}$, i.e. for every $u\in L^2(R,\widehat{m})$ and for every representative $v$ of $u$ and for every $t>0$ the function $\widehat{P}_t v$ is a representative of 
$e^{-t\widehat{L}} u$. Moreover, for any such $u$ and $v$ we have 
\begin{equation}\label{E:extends}
\widehat{P}_t v(x)=\widehat{e^{-tL}u|_X}(x) 
\end{equation}
for every $x\in R$. The family $\left\lbrace \widehat{p}_t(\cdot,\cdot)\right\rbrace_{t>0}$ is a heat kernel for $(e^{-t\widehat{L}})_{t>0}$. The semigroup $(e^{-t\widehat{L}})_{t>0}$ is ultracontractive with rate function $\gamma$. The operators $e^{-t\widehat{L}}$ are compact and $\widehat{L}$ has pure point spectrum.
\item[(iii)] There is a Hunt process $Y=(Y_t)_{t\geq 0}$ on $R$ in correspondence with $(\widehat{P}_t)_{t>0}$, i.e. such that $\widehat{P}_t(x,A)=\mathbb{P}_x(Y_t\in A)$ for all $x\in R$ and $A\in\mathcal{B}(R)$.
\end{itemize}
\end{lemma}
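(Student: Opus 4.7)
The plan is to handle the three parts in sequence, in each case leveraging the continuous extension $\widehat{p}_t$ on $R\times R$, the ultracontractivity granted by Assumption \ref{A:canon}, and the Sobolev bound of Lemma \ref{lemma:embedding_Neumann_supnorm}. A recurring theme is that $\widehat{m}$ is concentrated on the countable dense set $X$ by \eqref{E:transferm}, so integrals against $\widehat{m}$ collapse to sums over $X$; the only new content on the Royden boundary is continuity in $x$.

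\emph{Part (i).} I would first observe that $\widehat{p}_t$ inherits nonnegativity and symmetry from $p_t$, so each $\widehat{P}_t(x,\cdot)$ is a Borel measure. To obtain $\widehat{P}_t\mathbf{1}=\mathbf{1}$, I would use stochastic completeness of $(e^{-tL})_{t>0}$ on the finite discrete space $(X,m)$, i.e.\ $\sum_{y\in X}p_t(x,y)m(y)=1$ via \eqref{E:stochcompl}, which gives $\widehat{P}_t(x,R)=1$ for $x\in X$; continuity of $\widehat{p}_t(\cdot,y)$ and dominated convergence (with the uniform bound coming from ultracontractivity and finiteness of $\widehat{m}$) propagate the identity to all of $R$. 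Chapman--Kolmogorov transfers from \eqref{E:CK} on $X$ to $R$ in the same way. For the strong Feller property, the key point is again $\widehat{m}(R\setminus X)=0$: for any bounded Borel $f:R\to\IR$ one has
\[\widehat{P}_tf(x)=\sum_{y\in X}\widehat{p}_t(x,y)f(y)m(y),\]
and this series converges uniformly in $x$ because $\sum_{y\in X}\widehat{p}_t(x,y)m(y)=1$, hence the sum is continuous on $R$ as a uniform limit of continuous functions. Strong continuity at $t=0$ on $C(R)$ is then obtained by combining $L^2$-continuity with the Sobolev embedding $\|\widehat{u}\|_\infty\leq c\|\widehat{u}\|_{\widehat{Q}}$ and regularity of $\widehat{Q}$, extending by a density argument from $D(\widehat{Q})$ to $C(R)$.

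\emph{Part (ii).} Because $\widehat{m}$ is supported on the countable set $X$, restriction is an isometric isomorphism $L^2(R,\widehat{m})\to\ell^2(X,m)$, and by construction $\widehat{Q}$ transports to $Q$ under this identification. Consequently the generators correspond and $e^{-t\widehat{L}}$ corresponds to $e^{-tL}$. For $u\in L^2(R,\widehat{m})$ with representative $v$, the function $\widehat{P}_tv$ agrees on $X$ with $e^{-tL}(u|_X)$, is continuous on $R$ by part (i), and therefore coincides with the unique continuous extension of $e^{-tL}(u|_X)$, proving \eqref{E:extends} and identifying $\widehat{p}_t$ as the heat kernel for $(e^{-t\widehat{L}})_{t>0}$. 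Ultracontractivity with rate $\gamma$ is inherited from $(e^{-tL})_{t>0}$. Finally, ultracontractivity together with $\widehat{m}(R)<\infty$ forces $\widehat{p}_t\in L^2(R\times R,\widehat{m}\otimes\widehat{m})$, making $e^{-t\widehat{L}}$ Hilbert--Schmidt and therefore compact; self-adjointness of $\widehat{L}$ then yields the pure point spectrum.

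\emph{Part (iii).} By Lemma \ref{lemma:royden_metrizable} the Royden compactification $R$ is a compact metrizable (hence second countable) Hausdorff space, and by part (i) the family $(\widehat{P}_t)_{t>0}$ is a conservative Feller transition function on $R$ with the strong Feller property. Invoking \cite[Chapter I, (9.4) Theorem]{BG68} then produces the desired Hunt process $Y=(Y_t)_{t\geq 0}$ on $R$ whose semigroup of transition kernels is exactly $(\widehat{P}_t)_{t>0}$.

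The main obstacle I anticipate is the rigorous verification of the (strong) Feller property and of strong continuity at $t=0$ on $C(R)$: both steps rest on the interplay between concentration of $\widehat{m}$ on the countable dense set $X$, the uniform control supplied by ultracontractivity, and the passage from $L^2$- to sup-norm convergence via the Sobolev embedding of Lemma \ref{lemma:embedding_Neumann_supnorm} and regularity of $\widehat{Q}$; once these are in hand, everything else reduces to routine transfer of identities from $X$ to $R$ by density and continuity.
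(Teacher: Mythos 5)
Your proposal is correct and follows essentially the same route as the paper: reduce everything to sums over $X$ via $\widehat{m}(\partial_R X)=0$, transfer identities from $X$ to $R$ by continuity and dominated convergence, get strong continuity at $t=0$ from the Sobolev embedding of Lemma~\ref{lemma:embedding_Neumann_supnorm} plus density of $D(\widehat{Q})$ in $C(R)$, and invoke \cite[Chapter I, (9.4) Theorem]{BG68} together with Lemma~\ref{lemma:royden_metrizable} for the Hunt process (your Hilbert--Schmidt argument for compactness is just the content of the result the paper cites from Davies). Two small precision points: for strong continuity at $t=0$ you need $\lim_{t\to 0}\|e^{-tL}u-u\|_{Q}=0$ (from \cite[Lemma 1.3.3]{FOT}), not merely $L^2$-continuity, since Lemma~\ref{lemma:embedding_Neumann_supnorm} bounds the sup norm by the $Q$-norm; and the uniform convergence of the series $\sum_{y}\widehat{p}_t(\cdot,y)f(y)m(y)$ follows from the uniform bound on $\widehat{p}_t$ together with $m(X)<\infty$, not from the normalization $\widehat{P}_t\mathbf{1}=\mathbf{1}$ alone.
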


The restriction to $X$ in (ii) is well-defined since $m(x)>0$ for all $x\in X$.
\begin{proof}
First observe, using $\widehat{m}(\partial_R X)=0$, that for every $x\in R$ and $A\in \mathcal{B}(R)$ one has 
\begin{equation}\label{E:justasum}
\widehat{P}_t(x,A)=\sum_{y\in X\cap A} \widehat{p}_t(x,y)m(y).
\end{equation}
Using the finiteness of $m$, the boundedness of $\widehat{p}_t(\cdot,\cdot)$, and the continuity of
$\widehat{p}_t(\cdot,y)$ for every fixed $t>0$ and $y\in R$, dominated convergence yields the continuity of $\widehat{P}_tv$ for every bounded measurable function $v$ on $R$. In particular, for any $A\in\mathcal{B}(R)$ the map $\widehat{P}(\cdot,A)$ is measurable. Moreover, it is easy to see that for every $x\in X$ the map $\widehat{P}_t(x,\cdot)$ is a measure. For $x\in X$ one has $\widehat{P}_t(x,R)=\sum_{y\in X} p_t(x,y)m(x)=1$ and, using continuity, $\widehat{P}_t(x,R)=1$ for every $x\in R$. To verify the semigroup property, suppose that $s,t>0$ and that $v$ is a bounded and measurable function on $R$. For $x\in X$ we have $\widehat{P}_t\widehat{P}_s v(x)=\widehat{P}_{t+s} v(x)$ by (\ref{E:CK}), for $x\in \partial_R X$ it follows from the continuity of $\widehat{P}_t v$ on $R$. We verify that for $u\in C(R)$ we have  $\lim_{t\to 0}\|\widehat{P}_tu -u\|_\infty=0$. Since $D(\widehat{Q})$ is $\|\cdot\|_\infty$-dense in $C(R)$ (see Appendix~\ref{appendix:royden}) it suffices to show the convergence for every $\widehat{u}\in D(\widehat{Q})$. By \cite[Lemma 1.3.3]{FOT} we have $\lim_{t\to 0}\|e^{-tL} u-u\|_{Q}= 0$ and by Lemma~\ref{lemma:embedding_Neumann_supnorm} therefore $\lim_{t\to 0}\|e^{-tL} u-u\|_{\infty}= 0$. Because $\widehat{P}_t \widehat{u}$ is the continuous extension of $e^{-tL} u$ we obtain 
\[\lim_{t\to 0}\sup_{x\in R} |\widehat{P}_t \widehat{u}(x)-\widehat{u}(x)|= 0.\]
This shows (i). Let $u\in L^2(R,\widehat{m})$ be arbitrary and $v$ a representative of $u$. Then 
\begin{align*}
\int_{R}(\widehat{P}_t v)^2~d\widehat{m}=\sum_{x\in X} \left(\sum_{y\in X}u(y)p_t(x,y)m(y)\right)^2 m(x)=\sum_{x\in X} (e^{-tL} (u|_X))^2 m(y)<+\infty,
\end{align*}
so that $\widehat{P}_t$ gives rise to an operator on $L^2(R,\widehat{m})$. A similar calculation shows (\ref{E:extends}) for every $x\in X$ and the continuity of $\widehat{P}_t v$ then implies (\ref{E:extends}) for all $x\in R$. As the boundary has measure zero, $\widehat{P}_tv$ is a representative of 
$e^{-t\widehat{L}}u$. That the functions $\widehat{p}_t(\cdot,\cdot)$ form a heat kernel is obvious. The ultracontractivity of the semigroup $(e^{-t\widehat{L}})_{t>0}$ is easy to see, note that for $u\in L^2(R,\widehat{m})$ we have 
\[\big\|\widehat{P}_tu\big\|_{L^\infty(R,\widehat{m})}=\left\|e^{-tL}u|_X\right\|_{\ell^\infty(X)}\leq \gamma(t)\left\|u|_X\right\|_{\ltwo}=\gamma(t)\left\|u\right\|_{L^2(R,\hat{m})}.\]
For the remaining statements in (ii) see \cite[Theorems 2.1.4 and 2.1.5]{D89}. Item (iii) follows from \cite[Chapter I, (9.4) Theorem]{BG68} or \cite[Theorem A.2.2]{FOT}, as $R$ is separable and compactly metrizable, see Lemma \ref{lemma:royden_metrizable}.
\end{proof}

To define normal derivatives for the Royden boundary we follow \cite{Maeda64}. Let $\mu$ be an arbitrary Radon measure on $\mathcal{B}(\partial_R X)$. Note that as $\partial_R X$ is compact, the measure $\mu$ is automatically finite.

\begin{definition}
Let $\varphi\in L^2(\partial_R X, \mu)$ and $u\in D(Q)$ with $\widetilde{L} u\in \ell^2(X,m)$. We say that $\varphi$ is the \emph{normal derivative} of $u$ if for every $v\in D(Q)$ one has 
\begin{equation}\label{E:defnormalRoyden}
Q(u,v)-\langle \widetilde{L} u, v \rangle_{\ell^2(X,m)}=\int_{\partial_R X} \varphi \widehat{v}~d\mu.
\end{equation}
\end{definition}
If $\varphi$ is a normal derivative for $u\in D(Q)$ then it is easily seen to be unique, and we denote it by $\frac{\partial}{\partial n} u=\varphi$. By (\ref{E:saop}) every $u\in D(L)$ has a normal derivative given by $0$.

We formulate the Neumann problem.
\begin{definition}
Let $\varphi:\partial_R X\to\IR$ be measurable and bounded. We say that $u\in D(Q)$ is a \emph{solution of the Neumann problem} for $\varphi$ if 
\[\widetilde{L} u\equiv 0\quad \text{ and }\quad \frac{\partial}{\partial n}u=\varphi\] 
hold, where the latter equality means that $\varphi$ is a representative of $\frac{\partial}{\partial n}u$.
\end{definition}

The next lemma discusses uniqueness, its proof is the same as the proof for Lemma \ref{lemma:uniqueness_vertex}.
\begin{lemma}\label{lemma:uniqueness}
Let $\varphi:\partial_R X\to\IR$ be measurable. Then, two solutions $u_1,u_2$ of the Neumann problem differ only by a constant. 
\end{lemma}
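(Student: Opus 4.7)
The plan is to reduce to the scalar case by setting $w:=u_1-u_2$ and exploiting linearity of the defining identity \eqref{E:defnormalRoyden}. Both $u_1,u_2$ lie in $D(Q)$ with $\widetilde{L}u_i\equiv 0$ and normal derivative (a representative of) $\varphi$, so $w\in D(Q)$, $\widetilde{L}w\equiv 0$ on $X$, and $\frac{\partial}{\partial n}w=0$ in $L^2(\partial_R X,\mu)$. This last claim follows because the left-hand side of \eqref{E:defnormalRoyden} is bilinear in $u$, so the normal derivative itself depends linearly on $u$.

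Next I would test the Gauss--Green formula \eqref{E:defnormalRoyden} against $v=w$ itself. This gives
\begin{equation*}
Q(w,w)-\langle \widetilde{L}w,w\rangle_{\ell^2(X,m)}=\int_{\partial_R X} 0\cdot \widehat{w}\,d\mu=0,
\end{equation*}
and since $\widetilde{L}w\equiv 0$, we conclude $Q(w)=0$. Here I use that $w\in D(Q)\subseteq \ell^\infty(X)$ (by the canonical compactifiability assumption and Lemma \ref{lemma:embedding_Neumann_supnorm}), so $\widehat{w}$ is a well-defined bounded continuous function on $R$ and the right-hand integral makes sense.

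From $Q(w)=\frac12\sum_{x,y\in X}b(x,y)(w(x)-w(y))^2=0$ and the connectedness of $b$ over $X$ (part of Assumption \ref{A:canon}), we deduce $w(x)=w(y)$ for all $x,y\in X$, i.e.\ $w$ is constant. Thus $u_1-u_2$ is constant, as required.

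There is no serious obstacle here; the only small point to verify is that $w$ is indeed admissible as a test function in \eqref{E:defnormalRoyden}, which requires $w\in D(Q)$ (immediate from $u_1,u_2\in D(Q)$) and that the right-hand boundary integral is well defined (immediate from $\widehat{w}\in C(R)$ and finiteness of $\mu$). The rest is a one-line application of the Gauss--Green formula, exactly paralleling the proof of Lemma \ref{lemma:uniqueness_vertex}, the only change being that the boundary pairing is now an integral against $\mu$ over $\partial_R X$ rather than a sum over $\partial_v A$.
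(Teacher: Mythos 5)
Your proof is correct, but it follows a genuinely different (and in this setting more robust) route than the paper. The paper simply declares the proof to be ``the same as'' that of Lemma~\ref{lemma:uniqueness_vertex}, where the argument is: the difference of two solutions solves the Neumann problem with zero data, hence lies in $\operatorname{ker}(\widetilde{L})$, hence is constant. That argument leans on the identification of the normal derivative at a boundary vertex with the pointwise value of the formal Laplacian there, and on the claim that the kernel of the formal Laplacian consists only of constants; neither transfers cleanly to the Royden setting, where the normal derivative is only a boundary integral condition and where a canonically compactifiable graph may well carry nonconstant harmonic functions of finite energy (indeed the nontriviality of the Dirichlet problem on $\partial_R X$ depends on it). Your argument instead extracts $\frac{\partial}{\partial n}w=0$ for $w=u_1-u_2$ by linearity of \eqref{E:defnormalRoyden} in $u$, then plugs $v=w$ into the Gauss--Green identity to get $Q(w)=Q(w,w)=\langle \widetilde{L}w,w\rangle_{\ell^2(X,m)}+\int_{\partial_R X}0\cdot\widehat{w}\,d\mu=0$, and concludes constancy from the explicit form of $Q$ and connectedness. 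This energy argument is exactly what is needed here, and your side remarks (admissibility of $w$ as a test function, boundedness of $\widehat{w}$ via Lemma~\ref{lemma:embedding_Neumann_supnorm}, finiteness of $\mu$) close all the small gaps. In short: correct, complete, and arguably a better-justified version of what the paper intends.
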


We give a necessary criterion for the existence of a solution.
\begin{proposition}\label{proposition:existence_royden_centered}
Let $\varphi:\partial_R X\to\IR$ be measurable and bounded and let $u\in D(Q)$ be a solution of the Neumann problem for $\varphi$. Then, $\varphi$ satisfies $$\int_{\partial_R X} \varphi~d\mu=0.$$  
\end{proposition}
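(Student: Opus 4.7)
The plan is to mimic the proof of Proposition~\ref{proposition:Existence_solution_finite_graph} and simply test the Gauss-Green identity (\ref{E:defnormalRoyden}) against the constant function $\mathbf{1}$ on $X$.

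First I would verify that $\mathbf{1}\in D(Q)$. By definition, $\widetilde{Q}(\mathbf{1})=0$ since every summand $(\mathbf{1}(x)-\mathbf{1}(y))^2$ vanishes, so $\mathbf{1}\in\widetilde{D}$. Under Assumption~\ref{A:canon} the measure $m$ is finite, hence $\mathbf{1}\in\ell^2(X,m)$, and therefore $\mathbf{1}\in D(Q)=\widetilde{D}\cap\ell^2(X,m)$. Moreover the continuous extension of $\mathbf{1}$ to the Royden compactification $R$ is the constant function $\widehat{\mathbf{1}}\equiv 1$.

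Next I would substitute $v=\mathbf{1}$ into (\ref{E:defnormalRoyden}). The left-hand side is
\[
Q(u,\mathbf{1})-\langle \widetilde{L}u,\mathbf{1}\rangle_{\ell^2(X,m)}.
\]
The first term vanishes because by polarization $Q(u,\mathbf{1})=\tfrac12\sum_{x,y}b(x,y)(u(x)-u(y))(\mathbf{1}(x)-\mathbf{1}(y))=0$. The second term vanishes because $u$ solves the Neumann problem, so $\widetilde{L}u\equiv 0$. On the right-hand side we obtain
\[
\int_{\partial_R X}\varphi\,\widehat{\mathbf{1}}\,d\mu=\int_{\partial_R X}\varphi\,d\mu,
\]
which therefore equals $0$, as claimed.

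There is no real obstacle here; the only points to double check are that $\mathbf{1}\in D(Q)$ (which requires finiteness of $m$, guaranteed by Assumption~\ref{A:canon}) and that the continuous extension of $\mathbf{1}$ to $R$ is indeed identically $1$, which is immediate from the density of $X$ in $R$. The argument is the exact Royden-boundary analogue of the vertex-boundary case handled in Proposition~\ref{proposition:Existence_solution_finite_graph}.
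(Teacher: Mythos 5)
Your proof is correct and follows essentially the same route as the paper: testing the Gauss--Green identity (\ref{E:defnormalRoyden}) with $v\equiv\mathbf{1}$ and using $Q(u,\mathbf{1})=0$ together with $\widetilde{L}u\equiv 0$. Your additional verifications that $\mathbf{1}\in D(Q)$ under Assumption~\ref{A:canon} and that its continuous extension to $R$ is identically $1$ are details the paper leaves implicit.
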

\begin{proof}
Let $v\equiv 1$. Then, $v$ satisfies $Q(u,v)= 0$ and $\frac{\partial}{\partial n}v=0$. By definition, 
\[\int_{\partial_R X} \varphi ~d\mu=\sum_{x\in X} \widetilde{L}u(x)v(x)m(x)=0.\qedhere\]
\end{proof}

Similarly as before we give a sufficient criterion for a function to be a solution of the Neumann problem for bounded $\varphi$. Set
 $$\mathcal{N}=\left\{u\in D(Q)\colon \widetilde{L}u\in\ell^\infty(X),~\frac{\partial}{\partial n}u=0\right\}.$$ 
 By (\ref{E:defnormalRoyden}) we have $\mathcal{N}\subseteq D(L)$. The next lemma can be proved like Lemma \ref{lemma:sufficientvertex}. 
 \begin{lemma}
  The space $\mathcal{N}$ is dense in $D(Q)$ with respect to $\|\cdot\|_Q$. 
 \end{lemma}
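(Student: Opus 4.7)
The plan is to transcribe the argument used for Lemma~\ref{lemma:sufficientvertex} to the Royden setting, with the only new ingredient being the identification of $\mathcal{N}$ as a subset of $D(L)$ via the Gauss--Green identity \eqref{E:defnormalRoyden}. First I would record the following equivalence: for $u\in D(Q)$ with $\widetilde{L}u\in \ell^\infty(X)$, the condition $\frac{\partial}{\partial n}u=0$ is equivalent to $Q(u,v)=\langle \widetilde{L}u,v\rangle_{\ell^2(X,m)}$ for every $v\in D(Q)$, which (since $m$ is finite and hence $\widetilde{L}u\in\ell^2(X,m)$) is exactly the statement that $u\in D(L)$ with $Lu=\widetilde{L}u$. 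Consequently
\[\mathcal{N}=\{u\in D(L)\colon Lu\in\ell^\infty(X)\},\]
which is structurally the same object approximated in Lemma~\ref{lemma:sufficientvertex}.

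Next, given an arbitrary $v\in D(L)$, I would set $f:=(L+1)v\in \ell^2(X,m)$ and choose a sequence $(f_n)_n\subseteq \ell^2(X,m)\cap \ell^\infty(X)$ with $\|f_n-f\|_{\ell^2(X,m)}\to 0$ (for instance, pointwise truncations of $f$). Set $w_n:=(L+1)^{-1}f_n$. The $\ell^\infty$-contractivity of the resolvent, exactly as invoked in the proof of Lemma~\ref{lemma:char_solution_vertex_boundary}, yields $w_n\in\ell^\infty(X)$, and then $Lw_n=f_n-w_n\in\ell^\infty(X)$, so $w_n\in\mathcal{N}$.

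Third, the $\ell^2$-continuity of $(L+1)^{-1}$ gives $w_n\to v$ in $\ell^2(X,m)$, and expanding $Q(w_n-v,w_n-v)=\langle L(w_n-v),w_n-v\rangle_{\ell^2(X,m)}$ together with the identity $L(w_n-v)=(f_n-f)-(w_n-v)$ leads to
\[\|w_n-v\|_Q^2=\langle f_n-f,\,w_n-v\rangle_{\ell^2(X,m)},\]
which tends to zero by Cauchy--Schwarz. Hence $\mathcal{N}$ is $\|\cdot\|_Q$-dense in $D(L)$, and since $D(L)$ is itself $\|\cdot\|_Q$-dense in $D(Q)$ (a standard fact for any self-adjoint operator associated with a closed form), the claim follows.

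No step presents a genuine obstacle; the minor point worth verifying explicitly is that $\widetilde{L}w_n$ is well defined and coincides with $Lw_n$, but this is immediate from the equivalence in the first paragraph together with the inclusion $w_n\in \widetilde{D}\subseteq\ell^\infty(X)$ provided by Assumption~\ref{A:canon}. In this sense the Royden-boundary case is actually slightly cleaner than the vertex-boundary case, where boundedness of the approximants had to be enforced by hand.
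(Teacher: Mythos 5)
Your proof is correct and is essentially the paper's own argument: the paper simply states that this lemma ``can be proved like Lemma~\ref{lemma:sufficientvertex}'', and your resolvent approximation $w_n=(L+1)^{-1}f_n$ with $\|w_n-v\|_Q^2=\langle f_n-f,w_n-v\rangle_{\ell^2(X,m)}$ reproduces exactly that argument. Your preliminary identification $\mathcal{N}=\{u\in D(L)\colon Lu\in\ell^\infty(X)\}$ via \eqref{E:defnormalRoyden} and \eqref{E:saop} is precisely the observation (already half-recorded in the paper as $\mathcal{N}\subseteq D(L)$) that makes the transcription legitimate, so no further comment is needed.
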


The Radon measure $\mu$ on $\mathcal{B}(\partial_R X)$ can also be seen as a Radon measure on $\mathcal{B}(R)$. By  Lemma~\ref{lemma:embedding_Neumann_supnorm} it is of finite energy integral (see \cite[Section 2.2]{FOT}), i.e. there is some $c>0$ such that for all $\widehat{u}\in D(\widehat{Q})$ we have
$$\int_R |\widehat{u}|~d\mu\leq c\|\widehat{u}\|_{\widehat{Q}}.$$ 

 \begin{lemma}\label{lemma:char_solution}
 Let $\varphi:\partial_R X\to\IR$ be measurable and bounded such that  $\int_{\partial_R X} \varphi~d\mu=0$. Then, a function $u\in \ell^\infty(X)$ is a solution of the Neumann problem for $\varphi$ if for every $v\in\mathcal{N}$ one has $$\langle u,Lv\rangle_{\ell^2(X,m)}=\int_{\partial_R X} \varphi \widehat{v}~d\mu.$$ 
 \end{lemma}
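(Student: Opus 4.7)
The proof will mirror the structure of Lemma~\ref{lemma:char_solution_vertex_boundary}, with the only genuine adaptations coming from the fact that boundary integrals now live on $\partial_R X$ with respect to $\mu$ and involve the continuous extensions $\widehat{v}$. My plan is to first show $u\in D(Q)$ via a resolvent approximation, then upgrade the assumed identity from test functions in $\mathcal{N}$ to arbitrary $v\in D(Q)$, and finally read off both $\widetilde{L}u\equiv0$ and $\frac{\partial}{\partial n}u=\varphi$ from the resulting weak formulation.

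For the first step I will plug $v_\alpha:=\alpha(L+\alpha)^{-1}u\in D(L)$ into the hypothesis. The remark just after (\ref{E:defnormalRoyden}) gives $\frac{\partial}{\partial n}v_\alpha=0$, while $Lv_\alpha=\alpha u-\alpha^2(L+\alpha)^{-1}u$ belongs to $\ell^\infty(X)$ by the $\|\cdot\|_\infty$-contractivity of $\alpha(L+\alpha)^{-1}$; hence $v_\alpha\in\mathcal{N}$. The assumed identity then reads
\[
\alpha\langle u,u-\alpha(L+\alpha)^{-1}u\rangle_{\ell^2(X,m)}=\int_{\partial_R X}\varphi\,\widehat{v_\alpha}\,d\mu,
\]
and the right-hand side is bounded by $\|\varphi\|_\infty\|u\|_\infty\mu(\partial_R X)$ uniformly in $\alpha>0$, since $\|\widehat{v_\alpha}\|_\infty=\|v_\alpha\|_\infty\leq\|u\|_\infty$ ($X$ being dense in $R$). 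By \cite[Lemma 1.3.4]{FOT} this implies $u\in D(Q)$.

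For the second step, note that for $v\in\mathcal{N}\subseteq D(L)$ one has $\langle u,Lv\rangle_{\ell^2(X,m)}=Q(u,v)$, so the hypothesis rephrases as $Q(u,v)=\int_{\partial_R X}\varphi\,\widehat{v}\,d\mu$ on $\mathcal{N}$. I extend this to all $v\in D(Q)$ by density of $\mathcal{N}$, exactly as in Section~\ref{section:Brosamler_finite}: the left side is $\|\cdot\|_Q$-continuous, while for the right side I use the finite energy integral estimate $\int_R|\widehat{v_n}-\widehat{v}|\,d\mu\leq c\|v_n-v\|_{\widehat{Q}}=c\|v_n-v\|_Q$ (a consequence of Lemma~\ref{lemma:embedding_Neumann_supnorm}) together with $\varphi\in L^\infty$.

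For the third step, I test with $v=1_x$ for each $x\in X$. Since $1_x$ has finite support, its Royden extension satisfies $\widehat{1_x}|_{\partial_R X}=0$ (any net in $X$ converging to a boundary point eventually avoids $x$), so the right-hand side vanishes. Expanding $Q(u,1_x)=m(x)\widetilde{L}u(x)$ yields $\widetilde{L}u\equiv 0$ on $X$; in particular $\widetilde{L}u\in\ell^2(X,m)$, and the identity of the second step becomes $Q(u,v)-\langle\widetilde{L}u,v\rangle_{\ell^2(X,m)}=\int_{\partial_R X}\varphi\,\widehat{v}\,d\mu$ for all $v\in D(Q)$, which is precisely (\ref{E:defnormalRoyden}) and identifies $\varphi$ as $\frac{\partial}{\partial n}u$. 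The main obstacle, as in the vertex case, is the first step: one must recognise that the $v_\alpha$ are admissible test functions in $\mathcal{N}$ and that the hypothesis alone is strong enough to yield the one-sided bound on $\alpha\langle u,u-\alpha(L+\alpha)^{-1}u\rangle$. The two secondary subtleties, namely the finite energy integral property of $\mu$ and the vanishing of $\widehat{1_x}$ on the Royden boundary, are both direct consequences of the canonically compactifiable setting.
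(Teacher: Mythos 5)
Your proposal is correct and follows essentially the same route as the paper's proof: it establishes $u\in D(Q)$ via the resolvent test functions $\alpha(L+\alpha)^{-1}u\in\mathcal{N}$ (the argument the paper delegates to Lemma~\ref{lemma:char_solution_vertex_boundary}), extends the weak identity to all of $D(Q)$ by density of $\mathcal{N}$ together with the finite energy integral property of $\mu$, and reads off $\widetilde{L}u\equiv 0$ by testing with indicators, whose Royden extensions vanish on $\partial_R X$. The only difference is that you make explicit several details the paper leaves implicit.
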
 
 
 \begin{proof}
The same arguments as in the proof of Lemma \ref{lemma:char_solution_vertex_boundary} show that $\alpha(L+\alpha)^{-1}u\in\mathcal{N}$, and therefore $u\in D(Q)$ and 
\begin{equation}\label{E:tobeshown}
Q(u,v)=\int_{\partial_R X} \varphi \widehat{v}~d\mu
\end{equation}
for every $v\in \mathcal{N}$. For general $v\in D(Q)$ let $(v_n)_n\subset\mathcal{N}$ be such that $\lim_n\|v_n-v\|_Q= 0$. Then $\lim_n Q(u,v_n)= Q(u,v)$ and, since $\widehat{\mu}$ is of finite energy integral and $\varphi$ is bounded, 
 $$\lim_n\int_{\partial_R X}|\varphi\widehat{v_n}-\varphi\widehat{v}|~d\mu=0,$$
and we see that (\ref{E:tobeshown}) holds for every $v\in D(Q)$. 
 
To see that $\widetilde{L} u\equiv 0$, note that $Q(u,v)=0$ for every $v\in C_c(X)$ and that for $v=\frac{1}{m(x)}1_x$ we have $Q(u,v)=\widetilde{L}u(x)$.
 \end{proof}

The measures of finite energy integral are in one-to-one correspondence with the \emph{positive continuous additive functionals} of the Hunt process $Y=(Y_t)_{t\geq 0}$ via the Revuz correspondence, see \cite[Chapter 5.1, Theorem 5.1.3]{FOT}. In the present setup this tells that there is a positive continuous additive functional $(L_t)_{t\geq 0}$ of $Y$, unique in the sense of \cite[Chapter 5.1]{FOT}, such that the equality 
\begin{equation}\label{E:Revuz}
\int_X h\cdot\IE_x\int_0^t f(Y_s)dL_s~dm=\int_0^t \int_X f\cdot \widehat{P}_sh~d\mu~ds
\end{equation} 
holds for every $t>0$ and every bounded measurable $f,h:R\to\IR$. 
\begin{definition}
 We call the process $(L_t)_{t\geq 0}$ the \emph{local time} of $Y$ on the Royden boundary $\partial_R X$ with respect to $\mu$. 
\end{definition}


\begin{lemma}\label{L:utRoyden}
Let $\varphi:\partial_RX\to \mathbb{R}$ be bounded and measurable. Then for every $t\geq 0$ the function 
\[u_t(x):=\mathbb{E}_x\int_0^t\varphi(Y_s)dL_s,\quad x\in X,\]
is bounded, and $u_t(x)=\int_0^t \int_{\partial_R X} \varphi(y) \widehat{p}_s(x,y)~d\mu (y) ds$ holds for every $x\in X$.
\end{lemma}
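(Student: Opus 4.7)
The plan is to apply the Revuz correspondence formula (\ref{E:Revuz}) to a suitable pair of test functions and then invoke the Sobolev-type embedding of Lemma~\ref{lemma:embedding_Neumann_supnorm} for the boundedness.

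First, extend $\varphi$ by zero from $\partial_R X$ to all of $R$. Fix $z\in X$ and apply (\ref{E:Revuz}) with $f=\varphi$ and $h=\mathbf{1}_{\{z\}}$. Since $h$ is supported at the single vertex $z$ and $m(z)>0$, the left-hand side of (\ref{E:Revuz}) collapses to $m(z)\,u_t(z)$. On the right-hand side, the identity
\[\widehat{P}_s\mathbf{1}_{\{z\}}(y)=\int_R \mathbf{1}_{\{z\}}(w)\,\widehat{p}_s(y,w)\,d\widehat{m}(w)=\widehat{p}_s(y,z)\,m(z),\]
together with the fact that $\mu$ is supported in $\partial_R X$, yields
\[m(z)\,u_t(z)=m(z)\int_0^t\int_{\partial_R X}\varphi(y)\,\widehat{p}_s(y,z)\,d\mu(y)\,ds.\]
Cancelling $m(z)>0$ and using the symmetry $\widehat{p}_s(y,z)=\widehat{p}_s(z,y)$ (from Lemma~\ref{lemma:heat_kernel_feller}) gives the stated representation for every $z\in X$.

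For the boundedness assertion, I would estimate $|u_t(x)|\leq\|\varphi\|_\infty\,\mathbb{E}_x L_t$ and then control $\mathbb{E}_x L_t$ via the $1$-potential $U^1\mu(x):=\mathbb{E}_x\int_0^\infty e^{-s}\,dL_s$. Because $\mu$ has finite energy integral (as noted just before the lemma), standard potential theory places $U^1\mu$ in $D(\widehat{Q})$, and Lemma~\ref{lemma:embedding_Neumann_supnorm} then forces $U^1\mu$ to be uniformly bounded on $R$. Since
\[\mathbb{E}_x L_t=\mathbb{E}_x\int_0^t dL_s\leq e^t\,\mathbb{E}_x\int_0^t e^{-s}\,dL_s\leq e^t\,U^1\mu(x),\]
this yields the uniform bound $|u_t(x)|\leq e^t\|\varphi\|_\infty\|U^1\mu\|_\infty$.

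The main subtlety is that the Revuz correspondence (\ref{E:Revuz}) as stated is only an integrated identity in $x$; extracting a genuinely pointwise formula at every $x\in X$ depends crucially on the discreteness of $X$ and $m(z)>0$ for all $z\in X$, so that testing against a single point mass $\mathbf{1}_{\{z\}}$ is already enough. The exchange of the time, $\mu$- and $\widehat{m}$-integrals is routinely justified by Fubini-Tonelli given the boundedness of $\varphi$, the finiteness of $\mu$, and the nonnegativity of $\widehat{p}_s$, while the boundedness of $\widehat{p}_s(\cdot,\cdot)$ for fixed $s$ (guaranteed by the ultracontractivity asserted in Lemma~\ref{lemma:heat_kernel_feller}) ensures that all integrands are well behaved.
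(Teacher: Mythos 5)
Your derivation of the identity is essentially the paper's: the paper also specializes the Revuz formula (\ref{E:Revuz}) to a point mass (it takes $h=\tfrac{1_x}{m(x)}$ rather than $h=\mathbf{1}_{\{z\}}$, which is the same computation up to the factor $m(z)$ you cancel at the end), and the justification via discreteness of $X$, positivity of $m$, symmetry of $\widehat{p}_s$, and Fubini--Tonelli matches. Where you diverge is the boundedness: the paper extracts from the proof of \cite[Lemma 5.1.9]{FOT} that $x\mapsto \IE_x L_t$ lies in $D(Q)\subseteq\ell^\infty(X)$, whereas you bound $\IE_x L_t\leq e^t\,U^1\mu(x)$ by the $1$-potential and then use that $\mu$ has finite energy integral together with Lemma~\ref{lemma:embedding_Neumann_supnorm}. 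Both routes rest on the same potential-theoretic input (the finite energy integral of $\mu$, noted just before the Revuz discussion), and yours is arguably the more explicit and self-contained of the two, at the cost of the harmless exponential factor $e^t$. One small caveat: $U^1\mu$ is a priori only a quasi-continuous version of an element of $D(\widehat{Q})$, so the sup-norm bound from Lemma~\ref{lemma:embedding_Neumann_supnorm} should be read as a bound on $X$ (where $\widehat{m}$ charges every point), which is all the lemma requires; claiming $\|U^1\mu\|_\infty$ as a uniform bound on all of $R$ is slightly more than you need or can immediately assert.
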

\begin{proof}
The application of (\ref{E:Revuz}) to the function $h=\frac{1_x}{m(x)}$, $x\in X$, yields that for every $x\in X$ and bounded measurable function $\varphi:\partial_R X\to\IR$ the equality 
\begin{equation}\label{E:Revuzsimple}
\IE_x\int_0^t \varphi(Y_s)~dL_s =\int_0^t \int_{\partial_R X} \varphi(y) \widehat{p}_s(x,y)~d\mu (y) ds
\end{equation} 
holds, where for every $t$ the left hand side is bounded by $\|\varphi\|_\infty \IE_x L_t$. From the proof of \cite[Lemma 5.1.9]{FOT} it can be extracted that the map $X\to\IR,$ $x\mapsto\IE_x L_t$ is an element of of $D(Q)\subseteq \ell^\infty(X)$ for every $t\geq 0$. In particular, the right hand side exists for every $t\geq 0$, as the left hand side is bounded.
\end{proof}

We define a \emph{Green kernel} $\widehat{g}:X\times R$ via 
$$\widehat{g}(x,y)=\int_0^\infty \left(\widehat{p}_t(x,y)-\frac{1}{m(X)}\right)s~dt,\quad x\in X,y\in R,$$ where the integral exists by (\ref{E:easyhke}) and (\ref{E:mixing}).

Again following  \cite{brosamler} and \cite{bass_hsu} we can now formulate a counterpart of Theorem \ref{theoremvertexboundary} for Neumann problems with respect to the Royden boundary. Recall that Assumption \ref{A:canon} must be in force.

\begin{theorem}\label{theorem:solution_royden}
Let $\varphi:\partial_R X\to\IR$ be bounded and measurable such that $\int_{\partial_R X} \varphi~d\mu=0$.
\begin{itemize}
 \item[(i)] There is a unique solution $u\in D(Q)$ of the Neumann problem that satisfies\\ $\sum_{x\in X} u(x) m(x)=0$.
 \item[(ii)] For this function $u$ and every $x\in X$ we have
 $$
 u(x)=\lim_{t\to\infty}\IE_x\int_0^t \varphi(Y_s)~dL_s=\int_0^\infty \int_{\partial_R X} \varphi(y) \widehat{p}_s(x,y)~d\mu~ds
 = \int_{\partial_R X} \varphi(y) \widehat{g}(x,y)~d\mu. 
 $$
\end{itemize}
\end{theorem}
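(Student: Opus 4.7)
The strategy mirrors the argument for Theorem~\ref{theoremvertexboundary}, replacing sums over $\partial_v A$ with integrals over $\partial_R X$ against $\mu$ and using the Feller/Hunt setup from Lemma~\ref{lemma:heat_kernel_feller}. First, I would establish that for each $x\in X$ the limit $u(x):=\lim_{t\to\infty} u_t(x)$ exists. By Lemma~\ref{L:utRoyden} and the hypothesis $\int_{\partial_R X}\varphi\,d\mu=0$,
\[
u_t(x)=\int_0^t \int_{\partial_R X} \varphi(y)\left(\widehat{p}_s(x,y)-\tfrac{1}{m(X)}\right)d\mu(y)\,ds,
\]
and the uniform exponential mixing bound (\ref{E:mixing}) together with boundedness of $\varphi$ and finiteness of $\mu$ makes the integrand exponentially small in $s$. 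This yields convergence of $u_t(x)$, a uniform $\ell^\infty$-bound on $u$ and, in the limit, the two stated representation formulas using $\widehat{g}$ via (\ref{E:defgreen})-type manipulation.

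Next I would verify the normalisation $\sum_{x\in X}u(x)m(x)=0$: after interchanging the $s$-integral with the $x$-summation (justified by Fubini, using boundedness of $\widehat{p}_s$ and finiteness of $\mu$ and $m$), apply stochastic completeness $\sum_{x\in X}\widehat{p}_s(x,y)m(x)=1$ (which holds since $\widehat{P}_s\mathbf{1}=\mathbf{1}$ from Lemma~\ref{lemma:heat_kernel_feller}(i)) together with the centering $\int_{\partial_R X}\varphi\,d\mu=0$ to conclude.

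The core step is to show that $u$ satisfies the hypothesis of Lemma~\ref{lemma:char_solution}, i.e.\ $\langle u, Lv\rangle_{\ell^2(X,m)}=\int_{\partial_R X}\varphi\widehat{v}\,d\mu$ for each $v\in\mathcal{N}$. Since $u_t\to u$ uniformly (by the exponential bound above), one has $\langle u,Lv\rangle=\lim_t\langle u_t,Lv\rangle$. Using Lemma~\ref{L:utRoyden}, Fubini, self-adjointness of $L$ and the heat equation (\ref{E:heateqforp}), I would rewrite
\[
\langle u_t,Lv\rangle_{\ell^2(X,m)}=\int_{\partial_R X}\varphi(y)\Big(\widehat{v}(y)-\langle\widehat{p}_t(\cdot,y),v\rangle_{\ell^2(X,m)}\Big)d\mu(y),
\]
where $\widehat{v}(y)$ appears as the $s=0$ boundary term because $v\in\mathcal{N}\subset D(L)\subset D(Q)$ extends continuously to $R$ and $\widehat{p}_0(\cdot,y)$ plays the role of point evaluation. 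As $t\to\infty$, (\ref{E:mixing}) plus dominated convergence force $\int \widehat{p}_t(\cdot,y)v\,dm\to \tfrac{1}{m(X)}\sum_x v(x)m(x)$, which against the centered $\varphi$ integrates to $0$. What remains is exactly $\int_{\partial_R X}\varphi\widehat{v}\,d\mu$, which by Lemma~\ref{lemma:char_solution} identifies $u$ as a solution of the Neumann problem for $\varphi$. Finally, uniqueness follows from Lemma~\ref{lemma:uniqueness}: any two solutions differ by a constant, and the normalisation $\sum_x u(x)m(x)=0$ fixes that constant.

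The main obstacle is the interpretation of the boundary term $\widehat{v}(y)$ arising from the $s=0$ endpoint of the time integral: on the vertex-boundary side this was a clean sum against $p_0(x,y)=\delta_{xy}/m(y)$, while on the Royden side $y\in\partial_RX$ lies outside the support of $m$, so one has to use the continuous extension $\widehat{p}_s(\cdot,y)$ and let $s\downarrow 0$ with the strong Feller property from Lemma~\ref{lemma:heat_kernel_feller}(i) and the $\|\cdot\|_\infty$-continuity of the semigroup on $C(R)$ (established in the proof of that lemma) to obtain $\widehat{P}_s\widehat{v}(y)\to\widehat{v}(y)$ uniformly. Once this identification is secured, the remainder is a routine dominated-convergence argument paralleling Theorem~\ref{theoremvertexboundary}.
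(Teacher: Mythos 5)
Your proposal is correct and follows essentially the same route as the paper's proof: existence of the limit via the mixing estimate (\ref{E:mixing}), normalisation via stochastic completeness, and verification of the hypothesis of Lemma~\ref{lemma:char_solution} by integrating the heat equation against $u_t$ and passing to the limit. The only (cosmetic) difference is in identifying the boundary term $\widehat{v}(y)$ for $y\in\partial_R X$: you propose letting $s\downarrow 0$ using the uniform continuity of $t\mapsto\widehat{P}_t\widehat{v}$ on $C(R)$, whereas the paper fixes $t$, proves the identity $\widehat{v}(x)-\widehat{P}_t\widehat{v}(x)=\int_0^t \widehat{e^{-sL}(Lv)}(x)\,ds$ for $x\in X$ and then extends it to all of $R$ by approximating a boundary point with points of $X$ and dominated convergence (using $\|e^{-sL}(Lv)\|_\infty\leq\|Lv\|_\infty$) --- both resolutions work.
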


\begin{remarks}
In \cite[Theorem 7.9.]{kasue} it was proved that the Neumann problem on the Kuramochi boundary has a solution for given $\varphi$ on the boundary if and only if $\varphi$ is centered. They used the same notion of normal derivatives and solutions of the Neumann problem, but with a fixed \emph{harmonic} measure instead of an arbitrary  Radon measure $\mu$ on the boundary. Being targeted at density questions for spaces of harmonic functions, their approach provides a general abstract existence result for solutions. It does not lead to explicit representation formulas as in part (ii) of the theorem above. 
\end{remarks}

\begin{proof}

Similarly as for the vertex boundary case we see that for any $x\in X$ the limit $u(x):=\lim_{t\to \infty} u_t(x)$ exists and equals
$\int_0^\infty \int_{\partial_R X} \varphi(y) \widehat{p}_s(x,y)~d\mu (y) ds$. The convergence is uniform and, hence, $u$ is bounded on $X$. In fact, by (\ref{E:Revuzsimple}) and since $\varphi$ is centered, we have 
$$\IE_x \int_0^t\varphi(Y_s)~dL_s=\int_0^t \int_{\partial_R X} \varphi(y) \left(\widehat{p}_s(x,y)-\frac{1}{m(X)}\right)~d\mu (y) ds,$$
and using (\ref{E:mixing}) and the boundedness of $\varphi$, 
\begin{align}
 |u_t(x)-u_s(x)|&\leq \left|\int_s^t \int_{\partial_R X} \varphi(y) \left(\widehat{p}_r(x,y)-\frac{1}{m(X)}\right)~d\mu (y) dr\right|\notag\\
&\leq \int_s^t \int_{\partial_R X} |\varphi(y)| \left|\widehat{p}_r(x,y)-\frac{1}{m(X)}\right|~d\mu (y) dr\notag\\
&\leq c_1 \int_{\partial_R X} |\varphi(y)| d\mu(y)\:\int_s^t e^{-c_2r}~dr,\notag
\end{align}
which can be made arbitrarily small. Using the definition of the Green kernel $g$ and the fact that $\varphi$ is centered, 
\[u(x)=\int_{\partial_R X} \varphi(y) \widehat{g}(x,y)~d\mu,\quad x\in X.\]
We next observe that 
\begin{align*}
\sum_{x\in X} u(x) m(x)&=\sum_{x\in X}m(x)\int_0^\infty \int_{\partial_R X} \varphi(y) \left(\widehat{p}_s(x,y)-\frac{1}{m(X)}\right)~d\mu (y) ds\\
&=\int_0^\infty \int_{\partial_R X} \varphi(y)\sum_{x\in X} \left(\widehat{p}_s(x,y)m(x)-\frac{1}{m(X)}m(x)\right)~d\mu (y) ds\\
&=\int_0^\infty \int_{\partial_R X} \varphi(y)\left(\int_R \widehat{p}_s(x,y)~d\widehat{m}(x)-1\right)~d\mu (y) ds\\
&=0.
\end{align*}
%

Using  Lemma~\ref{lemma:char_solution} we show that $u$ is a solution of the Neumann problem. To do so we verify that for 
 $v\in\mathcal{N}$ and every $t>0,x\in R$ we have 
 \begin{equation}\label{E:claim}
\widehat{v}(x)-\widehat{P}_t\widehat{v}(x)=\int_0^t \widehat{e^{-tL}(Lv)}(x)~ds.
\end{equation}
Suppose that  $x\in X$. The symmetry of $p_t(x,y)$ and the heat equation (\ref{E:heateqforp}) imply that 
\[-\sum_{y\in X} \frac{d}{ds}p_s(x,y)|_{s=t} v(y) m(y)=\sum_{y\in X} p_t(x,y) Lv(y) m(y).\]
Integrating,
\begin{align}
\widehat{v}(x)-\widehat{P}_t\widehat{v}(x)&=-\sum_{y\in X} (p_t(x,y)-p_0(x,y))v(y) m(y)\notag\\
&=\int_0^t\sum_{y\in X} p_s(x,y) Lv(y) m(y)~ds\notag\\
&=\int_0^te^{-sL}(Lv)(x)ds.\notag
\end{align}
Given $x\in \partial_R X$ let $(x_n)_n\subset X$ be such that $\lim_n x_n=x$. By continuity
\[\lim_{n\to \infty}\widehat{v}(x_n)-\widehat{P}_t \widehat{v}(x_n)= \widehat{v}(x)-\widehat{P}_t \widehat{v}(x).\] 
Since $\ell^\infty(X)\subset \ell^2(X,m)$ and $Lv\in \ell^\infty(X)$ we can use the contractivity of the operators $e^{-tL}|_{\ell^\infty(X)}$ from $\ell^\infty(X)$ to $\ell^\infty(X)$ to see that   
\[|e^{-tL}(L v)(x)|\leq \|L v\|_\infty,\]
see \cite[Theorem 1.4.1]{D89}. Therefore dominated convergence and the continuity of $\widehat{e^{-tL}(Lv)}$ for any $t>0$ show that 
\[\lim\limits_{n\to\infty}\int_0^t e^{-sL}(Lv)(x_n)~ds=\int_0^t \lim\limits_{n\to\infty} e^{-sL}(Lv)(x_n)~ds=\int_0^t  \widehat{e^{-sL}(Lv)}(x)~ds,\]
so that (\ref{E:claim}) follows.
Let $v\in \mathcal{N}$ be arbitrary. By Fubini,
\[\sum_{x\in X} u_t(x)L v(x)m(x)=\int_{\partial_R X}\varphi(y)\int_0^t \sum_{x\in X}\widehat{p}_s(x,y)Lv(x)m(x)~ds~d\mu(y),\]
and by (\ref{E:claim}),
\begin{align*}
\sum_{x\in X} u_t(x)L v(x)m(x)=\int_{\partial_R X}\varphi(\widehat{v}-\widehat{P}_t\widehat{v})d\mu.  
\end{align*}
From (\ref{E:mixing}) we obtain $\lim_{t\to\infty} \widehat{P}_t\widehat{v}=\frac{1}{m(X)}\int_X\widehat{v}~dm$ uniformly on $R$. Thus, the equality  $\lim_{t\to\infty} \int_{\partial_R X} \varphi \widehat{P}_t\widehat{v}~d\mu=0$ follows.
Hence, using the uniform convergence of $u_t$ to $u$, we deduce $$\sum_{x\in X} u(x)L v(x)m(x)=\lim_{t\to\infty}\sum_{x\in X} u_t(x)L v(x)m(x)=\int_{\partial_R X} \varphi \widehat{v}~d\mu,$$
what allows to apply Lemma~\ref{lemma:char_solution}. The uniqueness of the solution follows from Lemma~\ref{lemma:uniqueness}.  
\end{proof}

\appendix
\section{Feller transition functions}\label{appendix:revuz}
Let $E$ be a locally compact separable metric space. Denote by $C_c(E)$ the space of compactly supported continuous real valued functions and by $C_0(E)$ the closure of $C_c(E)$ in $(C(E),\|\cdot\|_\infty)$.

A function $K:E\times\mathcal{B}(E)\to[0,\infty)$ is called \emph{Markovian kernel} if for every $x\in E$ the map $K(x,\cdot)$ is a nonnegative measure on $\mathcal{B}(E)$ of total mass $K(x,E)\leq 1$ and for every $A\in \mathcal{B}(E)$ the map $K(\cdot, A)$ is measurable. Given a Markovian kernel $K$ and a function $u$ we write $Ku (x):=\int_E u(y) K(x,dy)$ whenever the integral exists. We call a family $(P_t)_{t>0}$ of Markovian kernels $P_t$ a \emph{Feller transition function} on $E$ if $P_t C_0(E)\subseteq C_0(E)$, for every $s,t>0$ and every bounded measurable $u$ one has $P_tP_su(x)=P_{t+s} u(x)$ for every $x\in E$ and $\lim_{t\to 0}\left\|P_t u- u\right\|_\infty=0$ for every $u\in C_0(E)$.

A Feller transition function  $(P_t)_{t>0}$ is said to have the \emph{strong Feller property} if for any $t>0$ and any bounded Borel function $v$ the function $P_tv$ is bounded and continuous. 

It is known, c.f. \cite[Chapter I, (9.4) Theorem]{BG68} or \cite[Theorem A.2.2]{FOT}, that for every Feller transition function $(P_t)_{t>0}$ on $E$ there is a Hunt process $Y=(Y_t)$ with state space $E$ in correspondence with $(P_t)_{t>0}$, i.e. such that $P_t(x,E)=\IP_x(Y_t\in E)$ holds for every $x\in X$ and $E\in \mathcal{B}(X)$. Here $\IP_x$ the probability under the condition $Y_0=x$ for a given point $x\in E$. For details see \cite{BG68, FOT}. 

\section{Dirichlet forms, semigroups and heat kernels}\label{appendix:Dirichlet}

Suppose $m$ is a nonnegative Radon measure $E$ with full support. A symmetric nonnegative definite bilinear form $Q:D(Q)\times D(Q)\to \mathbb{R}$, defined on a dense subspace $D(Q)$ of $L^2(E,m)$ is called a \emph{closed form} if 
$D(Q)$, equipped with the scalar product $\langle\cdot,\cdot\rangle_Q:=Q(\cdot,\cdot)+\langle \cdot,\cdot\rangle_{L^2(E,m)}$, is a Hilbert space. A closed form $Q$ is called a \emph{Dirichlet form} if in addition for every $f\in D(Q)$ we have $0\vee f\wedge 1\in D(Q)$ and $Q(0\vee f\wedge 1)\leq Q(f)$. A Dirichlet form $Q$ is called \emph{regular} if $C_c(E)\cap D(Q)$ is both uniformly dense in $C_c(E)$ and $\|\cdot\|_Q$-dense in $D(Q)$. See \cite[Chapter 1]{FOT}.

For each closed form $Q$ there is a unique nonnegative definite self-adjoint operator $L$ in correspondence with $Q$, defined by 
\begin{align}\label{E:saop} D(L)=\{u\in D(Q)\colon& \text{there is } f\in L^2(E,m) \text{ such that }\notag\\
&Q(u,v)=\langle f,v\rangle_{L^2(E,m)} \text{ holds for all } v\in D(Q)\},
\end{align}
\[L u=f,\]
and called the \emph{generator of $Q$}. If $Q$ is a Dirichlet form then $(e^{-tL})_{t>0}$ is a strongly continuous semigroup of positivity preserving and contractive self-adjoint operators $e^{-tL}$ on $L^2(E,m)$. By $((L+\alpha)^{-1})_{\alpha>0}$ we denote the corresponding strongly continuous resolvent. 

We say that a Feller transition function $(P_t)_{t>0}$ on $E$ \emph{is in correspondence with a Dirichlet form $Q$} if for every $u\in L^2(E,m)$ and for every representative $v$ of $u$ and for every $t>0$ the function $P_t v$ is a representative of $e^{-tL} u$. If such a Feller semigroup exists, then $Q$ is regular.

If a Dirichlet form $Q$ is regular, then there exists a Hunt process $Y=(Y_t)_{t\geq 0}$ on $E$, unique in a suitable sense, such that for all Borel sets $A\subset E$ of finite measure and for $m$-a.e. $x\in A$ we have $e^{-tL}\mathbf{1}_A(x)=\IP_x(Y_t\in A)$.

Suppose $Q$ is a Dirichlet form with generator $L$. A family $\left\lbrace p_t(\cdot,\cdot)\right\rbrace_{t>0}$ of measurable functions $p_t:E\times E\to [0,+\infty)$ is called a \emph{heat kernel for $(e^{-Lt})_{t>0}$} if for every $t>0$, every $f\in L^2(E,m)$ and $m$-a.e. $x\in X$ we have 
\[e^{-tL}f(x)=\int_E f(y) p_t(x,y) dm(y).\] 
If $\left\lbrace p_t(\cdot,\cdot)\right\rbrace_{t>0}$ is a heat kernel for $(e^{-Lt})_{t>0}$ then we have $p_t(x,y)=p_t(y,x)$ and 
\begin{equation}\label{E:CK}
p_{t+s}(x,y)=\int_E p_t\left(x,z\right)p_s\left(z,y\right)dm(z).
\end{equation}
for all $s,t>0$ and $m$-a.e. $x,y\in E$.

The semigroup $(e^{-tL})_{t\geq 0}$ is called  \emph{ultracontractive} if there exists a decreasing function $\gamma:(0,+\infty)\to (0,+\infty)$ such that for every $t>0$ and $u\in L^2(E,m)$ we have 
\begin{equation}\label{E:ratefct}
\left\|e^{-tL}u\right\|_{L^\infty(E,m)}\leq \gamma(t)\left\|u\right\|_{L^2(E,m)}.
\end{equation}
To the function $\gamma$ we refer as \emph{rate function}, \cite[Chapter 14]{Grig09}. See \cite{CKS87, D89, C96} for further information on ultracontractive semigroups.

If $(e^{-tL})_{t>0}$ is ultracontractive with rate function $\gamma$ and $\left\lbrace p_t(\cdot,\cdot)\right\rbrace_{t>0}$ is a heat kernel for $(e^{-tL})_{t>0}$ then we have $p_{2t}(x,y)\leq \gamma(t)^2$
for every $t>0$ and $m$-a.e. $x,y\in E$, see \cite[Lemma 2.1.2.]{D89}. 

If $m(E)<+\infty$ and $(e^{-tL})_{t>0}$ is ultracontractive, then the generator $L$ has purely discrete spectrum, see for instance \cite[Theorem 2.1.4.]{D89}. In this case the heat kernel shows a typical mixing property, \cite[Theorem 2.4]{bass_hsu}, \cite[Proposition 2.2]{Pardoux99}: For every $t_0>0$ there are constants $c_1,c_2>0$ such that the inequality 
\begin{equation}\label{E:mixing}
\left|p_t(x,y)-\frac{1}{m(X)}\right|\leq c_1e^{-c_2 t}
\end{equation} 
holds for every $t> t_0$ and $m$-a.e. $x,y\in E$. The proof carries over from \cite[Theorem 2.4]{bass_hsu}, it makes use of the eigenfunction expansion of the heat kernel and ultracontractivity.

The setup in Section \ref{section:Preliminaries} arises as the special case where $E=X$ is a countable set, endowed with the discrete topology, and $m$ is a strictly positive function on $X$ so that $(X,m)$ is a discrete measure space. If $b$ is a graph on $X$ and $L$ is the associated Neumann Laplacian, then for any $t\geq 0$ we can define a symmetric function $p_t: X\times X \to (0,+\infty)$ by 
\begin{equation}\label{E:hk}
p_t(x,y)=\frac{1}{m(y)}e^{-tL} 1_{y}(x),
\end{equation}
now for all $x,y\in X$, and it is obvious that $\left\lbrace p_t(\cdot,\cdot)\right\rbrace_{t>0}$ is a heat kernel for the semigroup $(e^{-tL})_{t>0}$. It satisfies the heat equation, more precisely, for any fixed $y\in X$ we have  
\begin{equation}\label{E:heateqforp}
\frac{d}{ds}p(s,\cdot,y) |_{s=t} =-L p(t,\cdot,y).
\end{equation}
The upper bound
\begin{equation}\label{E:easyhke}
p(t,x,y)\leq\frac{1}{m(x)}, \quad x,y\in X,
\end{equation}
is obvious from (\ref{E:hk}). If $m(X)<+\infty$ then we have
\begin{equation}\label{E:stochcompl}
\sum_{y\in X} p(t,x,y)m(y)=e^{-s L} 1(x)=1
\end{equation}
for any $t>0$ and $x\in X$.

\section{The Royden compactification}\label{appendix:royden}
Let $X$ be a countably infinite set and $b$ a canonically compactifiable graph on $X$. We briefly sketch the construction of the Royden compactification of the graph and provide some basic facts. For more details we refer to \cite{Soardi}.

An easy calculation shows $\widetilde{Q}(fg)\leq \|f\|_\infty\widetilde{Q}(g)+\|g\|_\infty\widetilde{Q}(f)$ for every $f,g\in\widetilde{D}\subseteq \ell^\infty(X)$, what implies that $\widetilde{D}$ is an algebra with respect to pointwise multiplication. The completion 
$$\mathcal{A}:=\overline{\widetilde{D}}^{\|\cdot\|_\infty}$$ 
of $\widetilde{D}$ with respect to $\|\cdot\|_\infty$ is a 	unitary Banach algebra. Applying Gelfand theory to (the complexification of) $\A$ we infer the existence of a unique (up to homeomorphism) separable, compact Hausdorff space $R$ such that $X$ is a dense open subset of $R$, the subspace topology of $X$ in $R$ is the discrete topology,
every function in $\widetilde{D}$ can be uniquely extended to a continuous function on $R$ and the algebra $\widetilde{D}$ separates the points of $R$. See for instance \cite{KLSW17, Soardi} and also \cite{HKT, H, Maeda64}.
The space $R$ is called the {\em Royden compactification} of the graph $b$ over $X$. The  Stone-Weierstra{\ss}  theorem implies that the Banach algebra $\A$ is isomorphic to the algebra $C(R)$ of real-valued continuous functions on $R$. The set $\partial_R X:=R\setminus X$ is called the {\em Royden boundary} of the graph. For canonically compactifiable graphs an elementary proof shows that the Royden compactification is metrizable, see the following Lemma \ref{lemma:royden_metrizable}. In general the Royden compactification of a topological space does not have to be metrizable.

 \begin{lemma}\label{lemma:royden_metrizable}
 Let $b$ be a canonically compactifiable graph over $X$. Then, the Royden compactification is metrizable.
 \end{lemma}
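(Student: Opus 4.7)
The Royden compactification $R$ is a compact Hausdorff space, so by Urysohn's metrization theorem it is metrizable if and only if the Banach algebra $C(R)$ is separable. Since Gelfand theory identifies $C(R)$ with $\mathcal{A}=\overline{\widetilde D}^{\|\cdot\|_\infty}$, my task reduces to producing a countable subset of $\widetilde D$ that is dense with respect to $\|\cdot\|_\infty$.

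Under Assumption \ref{A:canon}, the measure $m$ is finite and $\widetilde D\subseteq\ell^\infty(X)\subseteq\ell^2(X,m)$, so $\widetilde D=D(Q)$. By Lemma \ref{lemma:embedding_Neumann_supnorm}, the identity map $(D(Q),\|\cdot\|_Q)\hookrightarrow(D(Q),\|\cdot\|_\infty)$ is continuous, so any countable $\|\cdot\|_Q$-dense subset of $D(Q)$ is automatically $\|\cdot\|_\infty$-dense in $\widetilde D$, and hence in $\mathcal{A}$. It therefore suffices to show that $(D(Q),\|\cdot\|_Q)$ is separable.

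To this end I would use the resolvent $R_1:=(L+1)^{-1}$. For $f\in\ell^2(X,m)$, set $u=R_1 f\in D(L)$, so that $Lu+u=f$. Using $Q(u,v)=\langle Lu,v\rangle_{\ell^2(X,m)}$ for $v\in D(Q)$ and specializing to $v=u$ yields $\|u\|_Q^2=Q(u,u)+\|u\|_{\ell^2(X,m)}^2=\langle f,u\rangle_{\ell^2(X,m)}\leq\|f\|_{\ell^2(X,m)}\|u\|_Q$, so that $\|R_1 f\|_Q\leq\|f\|_{\ell^2(X,m)}$ and $R_1$ is bounded from $\ell^2(X,m)$ into $(D(Q),\|\cdot\|_Q)$. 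Since $X$ is countable, the $\mathbb{Q}$-linear span of $\{\mathbf{1}_x:x\in X\}$ is a countable $\|\cdot\|_{\ell^2(X,m)}$-dense subset of $\ell^2(X,m)$; applying the bounded operator $R_1$ produces a countable $\|\cdot\|_Q$-dense subset of its range $D(L)$. Since $D(L)$ is itself $\|\cdot\|_Q$-dense in $D(Q)$, this yields the desired countable dense subset of $D(Q)=\widetilde D$, and thereby of $\mathcal{A}$.

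The main (and really only) step is the one-line resolvent bound $\|R_1 f\|_Q\leq\|f\|_{\ell^2(X,m)}$; the remainder is bookkeeping that combines the Sobolev-type embedding of Lemma \ref{lemma:embedding_Neumann_supnorm}, the countability of $X$, and the density of $D(L)$ in $D(Q)$. In particular no spectral theory is needed, in line with the paper's assertion that the result can be verified by elementary arguments.
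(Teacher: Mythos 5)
Your proof is correct, and at the decisive step it takes a genuinely different route from the paper. Both arguments share the same reduction: metrizability of the compact Hausdorff space $R$ amounts to separability of $C(R)\cong\mathcal{A}$, and via the embedding $\|u\|_\infty\leq c\|u\|_Q$ of Lemma \ref{lemma:embedding_Neumann_supnorm} this reduces to $\|\cdot\|_Q$-separability of $(D(Q),\|\cdot\|_Q)$ for a chosen finite measure $m$. From there the paper invokes the discrete spectrum of $L$ (which rests on the ultracontractivity result \cite[Theorem 5.1]{canon}) to obtain an orthonormal basis of eigenfunctions $(\psi_n)_n$ of $\ell^2(X,m)$, checks that it is a complete orthogonal system in $(D(Q),\|\cdot\|_Q)$, and takes rational linear combinations. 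You instead prove the resolvent bound $\|(L+1)^{-1}f\|_Q\leq\|f\|_{\ell^2(X,m)}$ — which is correct and holds for any Dirichlet form — push a countable dense subset of the separable space $\ell^2(X,m)$ through $(L+1)^{-1}$ to get a countable $\|\cdot\|_Q$-dense subset of $D(L)$, and conclude by the standard density of $D(L)$ in $(D(Q),\|\cdot\|_Q)$ (a fact the paper itself uses in the proof of Lemma \ref{lemma:sufficientvertex}). Your argument is the more elementary of the two: it avoids spectral theory and in particular does not need ultracontractivity or compactness of the resolvent, so it would apply verbatim to any Dirichlet form on a countable discrete measure space satisfying the Sobolev-type embedding. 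The paper's eigenfunction argument buys nothing extra here beyond reusing machinery already established for canonically compactifiable graphs. Two cosmetic points: the lemma does not presuppose Assumption \ref{A:canon}, so you should simply fix some finite measure $m$ with $m(x)>0$ (as the paper does) rather than cite the assumption; and note that separability of $\ell^2(X,m)$ alone would not suffice without your resolvent bound, since the $\|\cdot\|_Q$-topology is strictly finer — your proof correctly supplies exactly the needed continuity.
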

  
 \begin{proof}
 We need to show that the algebra $\mathcal{A}$ is separable with respect to $\|\cdot\|_\infty$. To do so it suffices to show that $\widetilde{D}$ is separable with respect to 
 $\|\cdot\|_\infty$. Let $m$ be an arbitrary measure on $X$ such that $(X,m)$ is a finite discrete measure space. Using Lemma~\ref{lemma:embedding_Neumann_supnorm} it suffices to find a countable subfamily of $\widetilde{D}$ that is dense in $\widetilde{D}(=D(Q))$ with respect to $\|\cdot\|_{Q}$. Since $L$ has discrete spectrum $0\leq\lambda_1\leq\lambda_2\leq\ldots$ and since $\ell^2(X,m)$ is separable, there is an orthonormal basis of eigenfunctions $(\psi_n)_n$ of $\ell^2(X,m)$. 
 Then, a simple calculation shows that $(\psi_n)_n$ is an orthogonal system in $(D(Q),\|\cdot\|_Q)$. Suppose $f\in D(Q)$, $f\not=0$, satisfies $\langle f,\psi_n\rangle_Q=0$ for every $n$. 
 Then, we infer $$0=\langle f,\psi_n\rangle_Q=(1+\lambda_n)\langle f,\psi_n\rangle_{\ell^2(X,m)}$$ and, hence, $\langle f,\psi_n\rangle_{\ell^2(X,m)}=0$ for every $n\in \IN$. This is a contradiction, since the $(\psi_n)$ are a basis of $\ell^2(X,m)$. Finally, note that $\operatorname{span}\{\psi_1,\psi_2,\ldots\}$ has a $\|\cdot\|_Q$-dense countable subset, given by finite linear combinations $\sum_{i=1}^m \alpha_i\psi_i$ with rational coefficients $\alpha_1,\ldots,\alpha_n$.
 
 \end{proof}
 
%
%

\bibliography{Literature}{}
\bibliographystyle{alpha}
\end{document}